\theoremstyle{plain}
\newtheorem{theorem}{Theorem}[section]
\newtheorem{lemma}[theorem]{Lemma}
\newtheorem{proposition}[theorem]{Proposition}
\newtheorem{corollary}[theorem]{Corollary}
\newtheorem{conjecture}[theorem]{Conjecture}
\newtheorem{question}[theorem]{Question}
\theoremstyle{definition}
\newtheorem{definition}[theorem]{Definition}
\newtheorem{example}[theorem]{Example}
\theoremstyle{remark}
\newtheorem{remark}[theorem]{Remark}
\newcommand{\groebner}{Gr\"{o}bner }
\newcommand{\froeberg}{Fr\"{o}berg }
\newcommand{\moreno}{Moreno-Soc\'{i}as }
\newcommand{\intnonneg}{\mathbb{Z}_{\geq 0}}
\newcommand{\calA}{\mathcal{A}}
\newcommand{\calB}{\mathcal{B}}
\newcommand{\calG}{\mathcal{G}}
\newcommand{\calM}{\mathcal{M}}
\newcommand{\Abb}{\mathbb{A}}
\newcommand{\QQ}{\mathbb{Q}}
\newcommand{\kk}{\Bbbk}
\newcommand{\ab}{{\bf a}}
\newcommand{\bb}{{\bf b}}
\newcommand{\tbar}{{\overline{t}}}
\newcommand{\xbar}{{\overline{x}}}
\DeclareMathOperator{\GL}{GL}
\DeclareMathOperator{\borel}{B}
\DeclareMathOperator{\ini}{in}
\DeclareMathOperator{\HS}{HS}
\DeclareMathOperator{\HF}{HF}
\DeclareMathOperator{\maxGBdeg}{maxGBdeg}
\DeclareMathOperator{\LM}{LM}
\DeclareMathOperator{\LC}{LC}
\DeclareMathOperator{\maxdeg}{maxdeg}
\title{Initial ideals of generic ideals and variations of Moreno-Soc\'{i}as conjecture}
\date{}
\author[K. Tani]{Koichiro Tani}
\address{Department of Pure and Applied Mathematics, Graduate School of Information
Science and Technology, Osaka University, Osaka, Japan}
\email{tani-k@ist.osaka-u.ac.jp}
\subjclass{Primary: 13P10, Secondary: 13D40}
\keywords{\groebner basis, Generic ideal, \moreno conjecture, Borel-fixed, Stability condition, Lexsegment ideal}
\begin{document}
\begin{abstract}
    It is known that the initial ideals of generic ideals are the same.
    \moreno conjectured that the initial ideal of generic ideals with respect
    to the degree reverse lexicographic order is weakly reverse lexicographic.
    In the first half of this paper, we study the initial ideal of generic ideals
    for arbitrary monomial order and prove that the initial ideal of generic ideals
    is Borel-fixed. It can be considered as a weakened version of
    \moreno conjecture. In the second half, we propose a new method
    of the computation of the initial ideal of generic ideals using
    stability condition of \groebner bases.
    We apply the method in the case of lexicographic order and study
    the relationship between the lexsegment ideal and the initial ideal of
    generic ideals. This study aims to bound the maximal degree of \groebner basis.
    At the last, we propose questions that can be considered as a
    lexicographic analogue of \moreno conjecture.
\end{abstract}
\maketitle

\section{Introduction}\label{section:Introduction}
Let $\kk$ be an infinite field and $S = \kk[\xbar] = \kk[x_1, \ldots, x_n]$ a
$\intnonneg$-graded polynomial ring
with $\deg(x_i)=1(1 \leq i \leq n)$. Throughout this paper, let $I$ be a homogeneous ideal
generated by $f_1, \ldots, f_s$ with $\deg(f_i)=d_i$.
About $\intnonneg$-graded $\kk$-algebra $S/I = \bigoplus_{d \in \intnonneg}(S/I)_d$, the
\textit{Hilbert function} and the \textit{Hilbert series} of $S/I$, are defined to be
\begin{equation*}
    \HF(S/I, d) = \dim_{\kk}(S/I)_d, \text{ and } \HS(S/I; t) = \sum_{i \in \intnonneg} \dim_{\kk}(S/I)_i t^i \text{, respectively}.
\end{equation*}

For polynomials in $\kk[\xbar]$
\begin{align*}
    f_1 &= a_{1,1}x_1^{d_1} + a_{1,2}x_1^{d_1-1}x_2 + \cdots + a_{1,r_1}x_n^{d_1} \\
    &\hspace*{80pt} \vdots \\
    f_s &= a_{s,1}x_1^{d_s} + a_{s,2}x_1^{d_s-1}x_2 + \cdots + a_{s,r_s}x_n^{d_s}
\end{align*}
with $a_{j,k} \in \kk$,
each $f_i$ is a $\kk$-linear combination of all monomials
of degree $d_i$.
Let $r_i = \binom{n+d_i-1}{d_i}$, i.e., the number of monomials of
degree $d_i$, and $N = \sum_{i=1}^{s}r_i$.
We assume that the points $\ab \in \Abb_{\kk}^N$ correspond to
ideals $I = (f_1, \ldots, f_s)$.
In other words, for a point $\ab \in \Abb_{\kk}^N$, let $\sigma_{\ab}$
be the canonical specialization homomorphism defined by
\begin{equation*}
    \sigma_{\ab} : \kk[\tbar][\xbar] \to \kk[\xbar], \quad f(\tbar, \xbar) \mapsto f(\ab, \xbar),
\end{equation*}
where $\tbar =\{t_{j,k} \mid 1 \leq j \leq s, 1 \leq k \leq r_j\}$
is a set of parameter variables.
We apply the specialization homomorphism $\sigma_{\ab}$ to polynomials
\begin{align*}
    F_1 &= t_{1,1}x_1^{d_1} + t_{1,2}x_1^{d_1-1}x_2 + \cdots + t_{1,r_1}x_n^{d_1} \\
    &\hspace*{80pt} \vdots \\
    F_s &= t_{s,1}x_1^{d_s} + t_{s,2}x_1^{d_s-1}x_2 + \cdots + t_{s,r_s}x_n^{d_s}
\end{align*}
in $\kk[\tbar][\xbar]$. By this operation,
a point $\ab \in \Abb_{\kk}^N$ corresponds to an ideal $I$ generated by
$\{f_i = \sigma_{\ab}(F_i) \mid i = 1, \ldots, s\}$.
Throughout this paper, we use the notations and settings described above.

We say ideals are \textit{generic} if they satisfy a property on a
Zariski open dense set in $\Abb_{\kk}^N$. We call ideals in such a Zariski open
dense set \textit{generic ideals}. For example, the following holds.

\begin{lemma}[{\cite[Theorem 1]{FroebergGenericHilbertSeries}}]\label{lemma:generic hilbert series}
    For fixed $n,s,d_1, \ldots, d_s$,
    there exists a Zariski open dense set $U \subset \Abb_{\kk}^N$
    such that for all $\ab \in U$, the Hilbert series $\HS(S/I)$ is the same,
    where $I$ corresponds to $\ab$. 
\end{lemma}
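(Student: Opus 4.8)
The plan is to compute $\HS(S/I;t)$ coefficient by coefficient, show that each coefficient is constant on a dense Zariski-open set, and then collapse the resulting family of open conditions --- one for each degree, a priori infinitely many --- into a single finite intersection by means of a uniform degree bound.

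Fix $d \in \intnonneg$. Since $(S/I)_d = S_d/I_d$ and $I_d$ is the image of the multiplication map
\[
  \mu_d \colon \bigoplus_{i=1}^{s} S_{d-d_i} \longrightarrow S_d , \qquad (g_1,\dots,g_s) \longmapsto \sum_{i=1}^{s} g_i f_i ,
\]
we have $\HF(S/I,d) = \binom{n+d-1}{d} - \operatorname{rank} M_d(\ab)$, where $M_d(\ab)$ is the matrix of $\mu_d$ in fixed monomial bases. Every entry of $M_d(\ab)$ is a (linear) polynomial in the coordinates of $\ab$, so $\operatorname{rank} M_d(\ab)$ is lower semicontinuous; letting $\rho_d$ be its maximal value over $\Abb_{\kk}^N$ --- equivalently, the rank over $\kk(\tbar)$ of the matrix built in the same way from $F_1,\dots,F_s$ --- the set
\[
  U_d := \bigl\{\, \ab \in \Abb_{\kk}^N \;\big|\; \operatorname{rank} M_d(\ab) = \rho_d \,\bigr\}
\]
is the non-vanishing locus of some $\rho_d \times \rho_d$ minor, hence Zariski-open, and it is nonempty by the definition of $\rho_d$, hence dense since $\Abb_{\kk}^N$ is irreducible. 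On $U_d$ one has $\HF(S/I,d) = \binom{n+d-1}{d} - \rho_d =: h_d$, a number depending only on $n,s,d_1,\dots,d_s$ and $d$. (Alternatively, one may invoke Grothendieck's generic freeness for the finitely generated $\kk[\tbar]$-module $\bigl(\kk[\tbar][\xbar]/(F_1,\dots,F_s)\bigr)_d$, whose fibre dimension is exactly $\HF(S/I,d)$.)

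Next I would produce a bound $D$, depending only on $n,s,d_1,\dots,d_s$, beyond which the Hilbert function is generically forced. Put $m = \min(s,n)$ and let $W \subseteq \Abb_{\kk}^N$ be the locus on which $f_1,\dots,f_m$ form a regular sequence. Since $S$ is Cohen--Macaulay, this happens precisely when $\dim S/(f_1,\dots,f_m) = n-m$, i.e.\ when the projection to $\Abb_{\kk}^N$ of the incidence variety $\bigl\{(\ab,[p]) \in \Abb_{\kk}^N \times \mathbb{P}^{n-1} : f_1(\ab,p)=\dots=f_m(\ab,p)=0\bigr\}$ has fibre of the minimal possible dimension $n-m-1$ over $\ab$; as fibre dimension is upper semicontinuous, $W$ is open, and the point with $f_i = x_i^{d_i}$ shows $W \neq \varnothing$, so $W$ is dense. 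On $W$ the Koszul complex resolves $S/(f_1,\dots,f_m)$, giving $\HS\bigl(S/(f_1,\dots,f_m);t\bigr) = \prod_{i=1}^{m}(1-t^{d_i})/(1-t)^n$. If $s \le n$ this is already $\HS(S/I;t)$ and we are done with $U=W$. If $s>n$ then $m=n$, the algebra $S/(f_1,\dots,f_n)$ is Artinian with top nonzero degree $D := \sum_{i=1}^{n}(d_i-1)$, and since $(f_1,\dots,f_n)\subseteq I$ we get $\HF(S/I,d)=0$ for all $d>D$ and all $\ab\in W$.

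It remains to handle the case $s>n$: I would set $U := W \cap \bigcap_{d=0}^{D} U_d$, a finite intersection of nonempty Zariski-open subsets of the irreducible variety $\Abb_{\kk}^N$, hence itself open and dense. For $\ab\in U$ one has $\HF(S/I,d)=h_d$ for $d\le D$ and $\HF(S/I,d)=0$ for $d>D$, so $\HS(S/I;t)=\sum_{d=0}^{D}h_d t^d$ is the same for every $\ab\in U$, as required. The one genuinely delicate step, which I expect to be the main obstacle, is precisely this last collapse: an arbitrary intersection of dense open sets need not be open, so one really does need the uniform bound $D$, and the bound in turn rests on the sole piece of geometric input --- that a generic choice of $\min(s,n)$ among the forms is a regular sequence. (Note that the argument identifies the common Hilbert series only when $s\le n$; determining it for $s>n$ is the content of the \froeberg conjecture and is not needed here.)
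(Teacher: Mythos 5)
Your proof is correct, and its degree-by-degree half is in substance the same as the paper's: the paper's nonvanishing of a wedge $F=\tau_{1,1}F_1\wedge\cdots\wedge\tau_{s,l_s}F_s$ is exactly the nonvanishing of a maximal minor of your matrix $M_d$, so your $U_d$ and the paper's $U_d$ are the same semicontinuity argument in two languages. Where you genuinely diverge is the delicate step you correctly flag: collapsing $\bigcap_{d\ge 0}U_d$ to a finite intersection. The paper does not argue this itself; it defers to \froeberg's original proof, whose finiteness rests on the fact that only finitely many formal power series can occur as $\HS(S/I;t)$ as $\ab$ ranges over $\Abb_{\kk}^N$. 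You instead produce an explicit uniform truncation degree $D=\sum_{i=1}^{n}(d_i-1)$ by showing that a generic choice of $\min(s,n)$ of the forms is a regular sequence (openness via properness of the projection from the incidence variety, nonemptiness via $f_i=x_i^{d_i}$) and reading off the Artinian socle bound from the Koszul resolution. This buys a self-contained proof that also immediately identifies the common Hilbert series when $s\le n$, at the cost of some genuine algebraic geometry (Chevalley semicontinuity plus properness, and the height characterization of regular sequences in a Cohen--Macaulay ring); the paper's cited route is softer but leaves the finiteness as a black box. The only imprecision worth noting is cosmetic: the locus where $\operatorname{rank}M_d(\ab)=\rho_d$ is the union of the nonvanishing loci of all $\rho_d\times\rho_d$ minors rather than of a single one, but it is still open, and density already follows from one nonzero minor.
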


The following longstanding conjecture is known as \textit{\froeberg conjecture}.

\begin{conjecture}[{\cite{FroebergConjecture}}]\label{conjecture:Froeberg conjecture}
    Fix $n, s, d_1, \ldots, d_s$. For a generic ideal $I$ described in Lemma~\ref{lemma:generic hilbert series},
    the Hilbert series $\HS(S/I;t)$ is $| \frac{\prod_{i=1}^{s}(1-t^{d_i})}{(1-t)^n} |$.
\end{conjecture}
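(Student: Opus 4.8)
This statement is the celebrated \froeberg conjecture, which remains open in general; the following is the standard plan of attack, together with a frank account of where it stalls, rather than a complete proof. Write $H(t) = \prod_{i=1}^{s}(1-t^{d_i})/(1-t)^n = \sum_{d\ge 0} h_d t^d$, and let $|H(t)| = \sum_{d\ge 0} e_d t^d$ be its truncation, so $e_d = h_d$ as long as $h_j > 0$ for every $j \le d$, while $e_d = 0$ from the first index with $h_d \le 0$ onward. The goal is $\HF(S/I,d) = e_d$ for generic $I$, and the plan is to prove the two inequalities separately: the lower bound $\HF(S/I,d) \ge e_d$ by induction on $s$ via the multiplication-map exact sequence, and the upper bound by reducing it to a maximal-rank assertion for one extra generic form.

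\emph{Lower bound.} Induct on $s$, the case $s = 0$ being trivial ($S/I = S$). Set $R = S/(f_1,\dots,f_{s-1})$; then $R_{d-d_s} \xrightarrow{\,\cdot f_s\,} R_d \twoheadrightarrow (S/I)_d$ is right-exact, so $\HF(S/I,d) = \HF(R,d) - \operatorname{rank}(\cdot f_s) \ge \max\{0,\ \HF(R,d) - \HF(R,d-d_s)\}$ for every choice of the $f_i$. Shrinking the generic locus if necessary, a generic tuple for $s$ forms restricts to a generic tuple for its first $s-1$ forms, so the inductive hypothesis gives $\HF(R,\cdot) = |A(t)|$ with $A(t) := \prod_{i<s}(1-t^{d_i})/(1-t)^n = \sum a_d t^d$. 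Whenever $e_d > 0$ one has $h_j > 0$, i.e.\ $a_j > a_{j-d_s}$, for all $j \le d$, which forces $a_j > 0$ for all $j \le d$ and hence $\HF(R,j) = a_j$ throughout that range; the exact sequence then gives $\HF(S/I,d) \ge a_d - a_{d-d_s} = h_d = e_d$. When $e_d = 0$ the inequality $\HF(S/I,d) \ge 0 = e_d$ is automatic, so $\HF(S/I,d) \ge e_d$ for all $d$.

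\emph{Upper bound.} The rank of $\cdot f_s : R_{d-d_s} \to R_d$ is lower-semicontinuous in the coefficients of all the $f_i$, so it suffices to produce \emph{one} tuple $(f_1,\dots,f_s)$, possibly over an extension field, for which $\cdot f_s$ has maximal rank---injective or surjective---simultaneously in every degree $d$; with the inductive hypothesis this forces $\HF(S/I,d) = \HF(R,d) - \min\{\HF(R,d),\HF(R,d-d_s)\} = e_d$ generically. This is the maximal-rank (or weak Lefschetz) reformulation of the conjecture, classically equivalent---replacing $f_s$ by a generic linear form on a quotient by powers of generic linear forms, and via apolarity---to statements about Hilbert functions of ideals of generic fat points. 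To carry it out one specializes the $f_i$ to a configuration that can be analyzed directly, such as generic monomials together with one generic form, or powers of generic linear forms, and invokes the matching structural input: Macaulay's classification of Hilbert functions with lexsegment/Borel techniques (settling $s = n+1$, after Stanley), an explicit generic-form construction (the case $n = 3$, after Anick), the Hochster--Laksov result on linear syzygies of generic forms (giving the conjecture in the first degree where injectivity can fail), or characteristic $p$ methods.

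The main obstacle is exactly this last step in full generality, and it is why the conjecture has resisted for decades: there is no known uniform mechanism ensuring that multiplication by a generic form has maximal rank on $S/(f_1,\dots,f_{s-1})$ for \emph{all} $n$, $s$, and degree vectors simultaneously. The semicontinuity reduction costs nothing, but exhibiting even a single explicit point of $\Abb_{\kk}^N$ whose ideal displays the required rank behaviour in every degree at once is the combinatorial and homological heart of the problem; each known case comes from locating such a special configuration---monomial, powers of linear forms, or generic fat points---inside a restricted range of the parameters, and no construction is known that covers the general case. Accordingly, in the remainder of this paper the conjecture should be treated as a hypothesis, or one should restrict to parameter ranges in which it is already a theorem.
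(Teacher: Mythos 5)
This statement is \froeberg conjecture, which the paper records as an open problem and does not prove (nor does it claim to); there is therefore no proof in the paper to compare against. Your proposal correctly recognizes this and, appropriately, does not claim a proof: you identify the two halves (the known coefficientwise lower bound $\HF(S/I,d)\ge e_d$ and the open maximal-rank upper bound) and correctly locate the obstruction in the latter. One caveat on your lower-bound sketch: as written, the induction invokes $\HF(R,\cdot)=|A(t)|$ for the first $s-1$ generic forms, i.e.\ the full conjecture one step down, which is not available; the inductive hypothesis only yields an inequality, and since $\HF(R,d)-\HF(R,d-d_s)$ is not monotone in $\HF(R,\cdot)$ the estimate does not pass through directly. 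Fr\"oberg's actual proof of the inequality runs the same exact-sequence induction but bounds $\HS(S/I;t)$ by the truncation $\bigl|\HS(R;t)\,(1-t^{d_s})\bigr|$ and then uses a combinatorial lemma on how truncation interacts with multiplication by $(1-t^{d_s})$ to compare nested truncations; with that lemma your argument closes. None of this affects the paper, which uses the conjecture only as context; your closing recommendation to treat it as a hypothesis matches how the paper in fact proceeds.
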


A formal power series $| \sum_{i=0}^{\infty}a_it^i |$ is defined as $\sum_{i=0}^{\infty}b_it^i$, where $b_i = a_i$
if $a_j > 0$ for all $j \leq i$ and $b_i = 0$ otherwise.
\froeberg conjecture is true for $s \leq n$ because, in this case, generic ideals are
generated by regular sequences (see~\cite[Theorem 1]{FroebergLoefwall}).
About other proved cases, see, for example,~\cite{provedFroebergConjecture1},
~\cite{provedFroebergConjecture3}, and~\cite{provedFroebergConjecture2}.

Let $\calM(\xbar)$ be the set of all monomials consisting of variables $x_1, \ldots, x_n$
and let $\preceq$ be a monomial order on $\calM(\xbar)$.
We suppose $x_1 \succeq \ldots \succeq x_n$ throughout this paper.
For $f \in S$, we define $\ini_{\preceq}(f)$ as the initial term of $f$.
A $\kk$-vector space spanned by $\{\ini_{\preceq}(f) \mid f \in I\}$
is the \textit{initial ideal} of $I$ and write $\ini_{\preceq}(I)$.
A finite set of polynomials $\calG = \{g_1, \ldots, g_t\}$ is a \textit{\groebner basis} with respect to $\preceq$
if $\ini_{\preceq}(I) = (\ini_{\preceq}(g_1), \ldots, \ini_{\preceq}(g_t))$.
A \groebner basis $\{g_1, \ldots, g_t\}$ is \textit{reduced} if they are monic and for each
$g_i$, there is no monomial in $g_i$ divisible by $\ini_{\preceq}(g_j)$ for any $j \neq i$.
It is known that a unique reduced \groebner basis exists for each monomial order and ideal.
We can compute the minimal generators of $\ini_{\preceq}(I)$ by computing reduced \groebner basis of $I$.
For more details on \groebner basis, see, for example,~\cite{BeckerWeispfenning} and~\cite{CoxLittleOSheaIVA}.

The initial ideal with respect to a fixed monomial order also remains the same for generic ideals.
\begin{lemma}[See, e.g.,~{\cite{PardueGenericSequence}}]\label{lemma:initial ideal of generic ideal}
    For a fixed monomial order $\preceq$, 
    there exists a Zariski open dense set $V$ such that for all $\ab \in V$,
    the initial ideal $\ini_{\preceq}(I)$ is the same.
\end{lemma}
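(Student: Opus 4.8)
The plan is to reduce the statement to a degree-by-degree linear-algebra computation and then patch the resulting open sets together using Noetherianity. Fix a degree $d$. Then $I_d = \sum_{i=1}^{s} S_{d-d_i}\cdot f_i$ is the row space of a matrix $M_d(\ab)$ whose entries depend polynomially (indeed linearly) on $\ab$: take the generators $x^{\alpha}f_i$ with $|\alpha| = d-d_i$ as rows, and the monomials of $S_d$, ordered strictly decreasingly by $\preceq$, as columns. The monomial space $\ini_{\preceq}(I_d)$ is spanned by the pivot monomials of the (unique) reduced row echelon form of $M_d(\ab)$, and column $j$ is a pivot column precisely when the rank of the first $j$ columns exceeds that of the first $j-1$ columns. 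Each such rank equals the size of the largest nonvanishing minor, so for each $j$ the locus where it attains its generic (maximal) value is a nonempty Zariski open subset of the irreducible space $\Abb_{\kk}^{N}$, hence dense. Intersecting over the finitely many columns yields a Zariski open dense set $V_d$ on which the set of pivot columns, equivalently the monomial space $\ini_{\preceq}(I_d)$, is a fixed monomial space, which I will call $J_d$.

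Next I would observe that $J := \bigoplus_{d} J_d$ is a genuine monomial ideal of $S$: picking $\ab \in V_d \cap V_{d+1}$ we get $x_{\ell}\,J_d = x_{\ell}\,\ini_{\preceq}(I)_d \subseteq \ini_{\preceq}(I)_{d+1} = J_{d+1}$ for every variable $x_\ell$, so $J$ is closed under multiplication by variables. Since $S$ is Noetherian, $J$ is generated by monomials of degree at most some finite $D$.

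Finally, let $U$ be the Zariski open dense set of Lemma~\ref{lemma:generic hilbert series}, on which $\HF(S/I,\cdot)$, hence $\dim_{\kk} I_d$ for every $d$, equals its generic value, and put $V := U \cap \bigcap_{d \le D} V_d$, a finite intersection of Zariski open dense sets and therefore again Zariski open dense. For $\ab \in V$ and any degree $d' \le D$ we have $\ini_{\preceq}(I)_{d'} = J_{d'}$; as $J$ is generated in degrees $\le D$, this forces $J \subseteq \ini_{\preceq}(I)$. On the other hand $\dim_{\kk}\ini_{\preceq}(I)_d = \dim_{\kk} I_d$ equals the generic value $\dim_{\kk} J_d$ for every $d$, because $\ab \in U$. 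An inclusion of monomial spaces of equal dimension is an equality, so $\ini_{\preceq}(I)_d = J_d$ in every degree, i.e.\ $\ini_{\preceq}(I) = J$ for all $\ab \in V$.

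The only delicate point is the passage from finitely many degrees to all degrees; it is precisely what is handled by combining Noetherianity (finiteness of $D$) with Lemma~\ref{lemma:generic hilbert series}, which pins down all the dimensions simultaneously. I expect this to be the step requiring care, since neither ingredient alone suffices. As an alternative route, one may instead run Buchberger's algorithm on $F_1,\dots,F_s$ over the rational function field $\kk(\tbar)$, collect into a single nonzero polynomial $\Delta(\tbar) \in \kk[\tbar]$ all the nonzero numerators and denominators of the leading coefficients encountered during the computation together with the denominators appearing when expressing the output polynomials in terms of the $F_i$, and set $V = \{\ab : \Delta(\ab) \neq 0\}$; on this dense open set the specialized family is still a Gröbner basis of $I$ with unchanged leading monomials by Buchberger's criterion, so $\ini_{\preceq}(I)$ is constant there.
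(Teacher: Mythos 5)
Your proof is correct and follows essentially the same route as the paper: a degree-by-degree genericity argument (your pivot columns of the reduced row echelon form of the Macaulay matrix are exactly the paper's largest standard exterior monomial of $\bigwedge^{t_d} I_d$, since the coefficient of a standard exterior monomial is a maximal minor of that matrix), followed by the identical patching step --- the ideal property of $J$ via a point of $V_d \cap V_{d+1}$, Noetherian finiteness of the generating degrees, and Lemma~\ref{lemma:generic hilbert series} to upgrade the inclusion $J \subseteq \ini_{\preceq}(I)$ to equality in all degrees. No gaps; your closing remark correctly identifies the one delicate point, which the paper handles the same way.
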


A monomial ideal $J$ is \textit{weakly reverse lexicographic} if, for all monomials $m$ in the minimal generators of $J$,
any monomial $m'$ satisfying $\deg(m) = \deg(m')$ and $m \preceq m'$ with respect to the reverse lexicographic order
also belongs to $J$. The following conjecture is known as \textit{\moreno conjecture}.

\begin{conjecture}[{\cite{MorenoSociasConjecture}}]\label{conjecture:Moreno-Socias conjecture}
    Fix $n, s, d_1, \ldots, d_s$. For a generic ideal $I$ described in Lemma~\ref{lemma:initial ideal of generic ideal},
    the initial ideal of $I$ with respect to the degree reverse lexicographic order
    is weakly reverse lexicographic.
\end{conjecture}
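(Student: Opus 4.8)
The plan is to run an induction on the number $s$ of generators, with $n$ and the degrees $d_1, \ldots, d_s$ left free. By Lemma~\ref{lemma:initial ideal of generic ideal}, with $\preceq$ the degree reverse lexicographic order, the initial ideal $\ini_{\preceq}(I)$ of a generic ideal is a well-defined monomial ideal depending only on $n$ and $d_1, \ldots, d_s$; write it $J = J(n; d_1, \ldots, d_s)$. The cases $s = 0$ (the zero ideal) and $s = 1$ (where $J = (x_1^{d_1})$) are trivially weakly reverse lexicographic. For the inductive step I would fix $J' := J(n; d_1, \ldots, d_{s-1})$, assume it is weakly reverse lexicographic, and pass to $J$ by adjoining a generic form $f_s$ of degree $d_s$. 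Two general facts make the comparison of $J'$ and $J$ tractable. First, since $\preceq$ refines the grading, $(\ini_{\preceq} I)_d = \ini_{\preceq}(I_d)$ in each degree $d$. Second, $I_d = I'_d + f_s \cdot S_{d - d_s}$ for $d \geq d_s$ and $I_d = I'_d$ for $d < d_s$; hence $J_d = J'_d$ below degree $d_s$, and above it we only need to understand $\ini_{\preceq}\bigl(I'_d + f_s S_{d-d_s}\bigr)$ in terms of $J'_d$ and the generic data of $f_s$.

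The heart of the argument is then the following \textbf{Main Lemma}: for each $d \geq d_s$, the subspace $(f_s S_{d - d_s} + I'_d)/I'_d$ of the quotient $(S/I')_d$ is a \emph{generic} subspace of the dimension prescribed by the (known) Hilbert series, so that its set of $\preceq$-leading standard monomials of $S/I'$ is exactly the set of the $\preceq$-largest standard monomials of $S/I'$ in degree $d$. Granting this, $J_d$ is obtained from $J'_d$ by adjoining a reverse-lexicographic top segment among the standard monomials of $S/I'$. The genericity claim here is where Lemma~\ref{lemma:generic hilbert series} and the Hilbert-series bookkeeping enter: one must show that the multiplication-by-$f_s$ map $(S/I')_{d-d_s} \to (S/I')_d$, for generic coefficients of $f_s$, has image in sufficiently general position that a Gaussian-elimination (``generic coordinate subspace'') argument identifies its leading terms with a revlex top segment. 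I would prove this by choosing a convenient one-parameter or monomial-curve specialization of $\ab$ inside the Zariski dense set of Lemma~\ref{lemma:initial ideal of generic ideal}, as in the standard proof of that lemma, reducing the statement to a rank computation for an explicit matrix.

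Having located $J$ as ``$J'$ plus a revlex top segment of standard monomials in each degree $\geq d_s$,'' the remaining task is to verify the defining property of weakly reverse lexicographic ideals: for every minimal generator $m$ of $J$ and every monomial $m' \succeq_{\mathrm{revlex}} m$ with $\deg m' = \deg m$, one has $m' \in J$. Split into the case $m \notin J'$ (so $m$ is among the top standard monomials of $S/I'$ newly absorbed into $J$, and then any $m' \succeq_{\mathrm{revlex}} m$ either already lies in $J'$ or is itself an even larger standard monomial, hence also absorbed) and the case $m \in J'$ (where one combines the inductive weakly-revlex property of $J'$ with the Borel-fixedness of $J$ established in the first half of this paper, used to propagate the revlex upper-set condition from degree $d_s$ to higher degrees, since for degree reverse lex every Borel move strictly increases a monomial in the revlex order).

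The \textbf{main obstacle} is precisely the Main Lemma together with the second half of the last step: a reverse-lexicographic top segment of monomials need \emph{not} be closed under multiplication by variables, so one cannot simply invoke Macaulay-type monotonicity to conclude that the ``largest revlex segment consistent with the Fröberg Hilbert function'' is actually an ideal and is actually attained. Controlling this requires a delicate degree-by-degree comparison of the revlex rank of each standard monomial of $S/I'$ against the Hilbert function, exploiting the Borel structure of $J'$ and $J$ to transfer information upward in degree, and is exactly the regime $d_s \leq d \ll \infty$ (before the Fröberg series truncates to zero) in which the conjecture has so far resisted proof in general; the cases where it is known correspond to situations in which this low-degree bookkeeping collapses.
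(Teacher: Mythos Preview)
The statement you are attempting to prove is listed in the paper as a \emph{conjecture} (Conjecture~\ref{conjecture:Moreno-Socias conjecture}), not as a theorem; the paper offers no proof of it and explicitly treats it as open, proving only the weaker Borel-fixedness statement (Theorem~\ref{theorem:main theorem1}). There is therefore no ``paper's own proof'' against which to compare your proposal.

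That said, your plan has a genuine gap, which you yourself flag at the end. The entire weight of the argument rests on the \textbf{Main Lemma}: that for generic $f_s$, the image of multiplication by $f_s$ in $(S/I')_d$ has leading terms equal to the revlex-top segment of standard monomials. This is not a lemma you prove; it is a restatement of the conjecture one degree at a time. The suggestion to ``choose a convenient one-parameter or monomial-curve specialization \ldots\ reducing the statement to a rank computation for an explicit matrix'' is precisely the step that has not been carried out in general: the obstruction is that the rank being maximal (which one can often arrange, and which amounts to Fr\"oberg-type information) does \emph{not} by itself force the pivot columns to be the revlex-largest ones. Generic full rank controls the Hilbert function, not the position of the leading monomials, and the gap between ``generic subspace'' and ``subspace whose initial space is the revlex top segment'' is exactly the content of the Moreno-Soc\'ias conjecture beyond Fr\"oberg's. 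Your closing paragraph concedes this (``is exactly the regime \ldots\ in which the conjecture has so far resisted proof in general''), so what you have written is an outline of \emph{why} the conjecture is plausible and how a proof would have to be organized, not a proof.

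A secondary issue: in the case split at the end, when $m \in J'$ is a minimal generator of $J$, you appeal to Borel-fixedness of $J$ to ``propagate the revlex upper-set condition.'' Borel-fixedness gives closure under the moves $x_j \mapsto x_i$ for $i<j$, which do increase a monomial in revlex, but they do not reach \emph{every} revlex-larger monomial of the same degree (e.g.\ $x_1 x_3^2$ is revlex-larger than $x_2^3$ but is not a Borel move away from it). So Borel-fixedness alone cannot close this case, and you would again need the Main Lemma in the relevant degrees.
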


The proved cases of \moreno conjecture are documented in, for example,~\cite{CapaverdeGaoMorenoSociasConjecture}.
Pardue~\cite{PardueGenericSequence} showed that \moreno conjecture 
implies \froeberg conjecture.

In this paper, we reconstruct these discussions by using exterior algebra, inspired by~\cite{BayerStillman}.
Thanks to this reconstruction,
we can prove that the initial ideal of generic ideals is Borel-fixed. This can be considered as
a weakened version of \moreno conjecture. For more details,
see Section~\ref{section:The initial ideal of generic ideals is Borel-fixed}.

Furthermore, we propose a new method of computing the initial ideal of generic ideals using
the stability condition of \groebner bases~\cite{KalkbrenerStability}.
Using this method, we study
when the initial ideal of generic ideal with respect to the lexicographic order
becomes lexsegment. This question aims to identify
the worst-case complexity of computing \groebner bases.
\groebner basis is known as a good tool not only for purely mathematical problems but
also for applied mathematical problems across various fields.
However, it is often the case that polynomials of high degree
appear in a \groebner basis and a computation of a \groebner basis takes tremendously long time.
Therefore, investigation of maximal degree of \groebner basis is important for the analysis of
the complexity of the computation of \groebner basis.

Bounding a maximal degree of \groebner basis for a homogeneous ideal is well studied.
For example,~\cite{DubeMcaulayConstant},~\cite{HashemiMacaulayConstant} and~\cite{MayrRitscherMacaulayConstant}
bounded it by using and computing Macaulay constants. Under the fixed Hilbert series, we can bound
the maximal degree of polynomials in the reduced \groebner basis by using the lexsegment ideal
(see Definition~\ref{definition:lexsegment ideal})
and Macaulay's Theorem (see Theorem~\ref{theorem:Macaulay's Theorem}).
This bound is described in Corollary~\ref{corollary:maxGBdeg bound with lexsegment ideal}.
Furthermore,~\cite{BayerThesis} and~\cite{RegularityofLexsegment} computed
the maximal degree of minimal generators of lexsegment ideals.
Finding lexsegment ideals as initial ideals signifies the optimality of the bound.
Using the method we propose, we can identify cases where lexsegment ideals appear as
initial ideals and cases where lexsegment ideals never appear as initial ideals
(see Corollary~\ref{corollary:main corollary2} and Corollary~\ref{corollary:main corollary3}).

At the last, we propose questions
that can be considered as the lexicographic analogue of \moreno conjecture.
For more details, see Section~\ref{section:the initial ideal of generic ideals with respect to lexicographic order}.

\subsection*{Acknowledgement}
The author would like to thank his supervisor Akihiro Higashitani  for his very helpful
comments and instructive discussions. 
This work was supported by the Research Institute for Mathematical Sciences,
an International Joint Usage/Research Center located in Kyoto University.

\section{The initial ideal of generic ideals is Borel-fixed}\label{section:The initial ideal of generic ideals is Borel-fixed}
In this section we reconstruct discussions about generic ideals by using exterior algebra.
This method is analogous to the approach used to prove that the generic initial ideal is Borel-fixed.
The existence of generic initial ideals and Borel-fixedness in characteristic $0$ were proved by Galligo~\cite{Galligo}, and later by
Bayer and Stillman~\cite{BayerStillman} for any order and any characteristic.
Proofs of the existence and Borel-fixedness are written, for example, in~\cite[Section 15.9]{EisenbudText}.

We consider $I_d$ as a $\kk$-subspace of $S_d$ for each degree $d \geq 0$. Let $t = \dim_{\kk}(S/I)_d$.
Then, $\bigwedge^t S_d$ is a $\kk$-vector space with a basis of
\begin{equation*}
    \{m_1 \wedge \ldots \wedge m_t \mid m_1 \succeq \ldots \succeq m_t \text{ are monomials in } S_d\}
\end{equation*}
for a fixed monomial order $\preceq$. We will say that $m = m_1 \wedge \ldots \wedge m_t$ is a
\textit{standard exterior monomial} if $m_1 \succeq \ldots \succeq m_t$.
We order standard exterior monomials lexicographically. That is,
$m = m_1 \wedge \ldots \wedge m_t \geq m' = m_1' \wedge \ldots \wedge m_t'$ if and only if
$m_i \succeq m_i'$ for the smallest $i$ such that $m_i \neq m_i'$.
If $h_1, \ldots, h_t$ are a basis of $I_d$, $\bigwedge^t I_d$ is a $1$-dimensional $\kk$-subspace of $\bigwedge^t S_d$ generated by
$h = h_1 \wedge \ldots \wedge h_t$.
We can expand $h$ and rewrite it as a linear combination of standard exterior monomials, allowing us to determine the largest
standard exterior monomial of $h$ with nonzero coefficients. We also refer to the product of this coefficient and the largest
standard exterior monomial of $h$ as the \textit{initial term} of $h$, denoted by $\ini(h)$. The standard exterior monomial
of $\ini(h)$ consists of monomials that form a basis of $\ini_{\preceq}(I)_d$. Using these notions, we reconstruct discussions
about generic ideals.

\begin{theorem}[{\cite[Theorem 1]{FroebergGenericHilbertSeries}}]\label{theorem:Froeberg generic hilbert series}
    Let $\kk$ be an infinite field and fix positive integers $n, s, d_1, \ldots, d_s$.
    We correspond a point $\ab \in \Abb_{\kk}^N$ to an ideal $I = (f_1, \ldots, f_s)$. Then, there is a Zariski open dense set $U$
    such that, for all $\ab \in U$, the Hilbert series $\HS(S/I)$ is the same.
\end{theorem}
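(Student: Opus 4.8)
The plan is to prove the theorem by exhibiting, for each degree $d$, a Zariski open dense subset $U_d \subset \Abb_{\kk}^N$ on which the Hilbert function value $\HF(S/I,d)$ is constant and equal to its maximum over all $\ab$; then intersecting finitely many of these will give the desired $U$. The key observation, following the exterior-algebra setup described above, is that for a fixed degree $d$ the dimension $\dim_{\kk}(I)_d$ can be read off from the rank of an explicit matrix whose entries are polynomials in the parameters $\tbar$. Indeed, $I_d$ is the $\kk$-span inside $S_d$ of the vectors $\{\xb^\alpha \cdot \sigma_{\ab}(F_i) : i = 1,\ldots,s,\ \deg(\xb^\alpha) = d - d_i\}$, and writing each such vector in the monomial basis of $S_d$ produces a matrix $M_d(\ab)$ whose entries are values at $\ab$ of fixed polynomials in $\kk[\tbar]$ (in fact linear in the $t_{j,k}$). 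Thus $\HF(S/I,d) = \dim_{\kk} S_d - \operatorname{rank} M_d(\ab)$.

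**Main steps.**

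First I would fix $d$ and assemble the matrix $M_d \in \kk[\tbar]^{p \times q}$ as above, with $q = \binom{n+d-1}{d}$. Second, I would invoke the standard fact that the rank of a matrix with polynomial entries is generically maximal: the locus where $\operatorname{rank} M_d(\ab) \geq \rho$ is the complement of the common vanishing locus of all $\rho \times \rho$ minors, and since these minors are polynomials in $\kk[\tbar]$ not all identically zero precisely when $\rho \leq \rho_{\max} := \max_{\ab} \operatorname{rank} M_d(\ab)$, the set $U_d := \{\ab : \operatorname{rank} M_d(\ab) = \rho_{\max}\}$ is Zariski open and nonempty, hence dense since $\Abb_{\kk}^N$ is irreducible (here $\kk$ infinite guarantees a $\kk$-point witnessing each nonzero minor, but even without that the scheme-theoretic statement suffices and specializes). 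On $U_d$ we therefore have $\HF(S/I,d) = q - \rho_{\max}$, a constant. Third, since $\HF(S/I,d) = q$ for all $d < \min_i d_i$ (no generators in low degree) and since $S/I$ is a quotient of a polynomial ring in $n$ variables, standard bounds show the Hilbert function stabilizes: there is $D$ depending only on $n, s, d_1,\ldots,d_s$ such that the behavior for $d \le D$ determines $\HS(S/I;t)$ for all generic $\ab$ — for instance one can argue via the lexsegment ideal and Macaulay's Theorem, or simply note that it suffices to take $U = \bigcap_{d=0}^{D} U_d$, which is still open and dense as a finite intersection. This $U$ works.

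**The main obstacle.**

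The genuinely substantive point — and the one requiring care — is the third step: justifying that only finitely many degrees $d$ need to be controlled, so that the infinite intersection $\bigcap_{d \ge 0} U_d$ can be replaced by a finite one and thus remains open. Equivalently, one must show that the generic Hilbert function is eventually determined by a finite amount of data. One clean route: the generic value of $\HF(S/I,d)$ is, by Macaulay's Theorem, bounded above by the Hilbert function of the lexsegment ideal with the same initial values, and a quotient by an ideal containing generic forms of degrees $d_1,\ldots,d_s$ has Krull dimension and multiplicity controlled by $n$ and the $d_i$; hence the Hilbert function agrees with a fixed polynomial (or vanishes) past a bound $D(n,s,d_1,\ldots,d_s)$, regardless of $\ab \in \bigcap_{d \le D} U_d$. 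A subtler variant of this obstacle, if one wants to stay purely within the exterior-algebra framework, is to observe that once $\bigwedge^{t} I_d$ has a nonzero $\ini(h)$ whose standard exterior monomial is the lexsegment set in degree $d$, the same persists in degree $d+1$ by multiplying through, forcing stabilization; this is the interpretation that the subsequent sections of the paper exploit. Either way, the heart of the matter is the finiteness/stabilization argument; the rank-genericity argument itself is routine.
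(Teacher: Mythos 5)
Your proposal is correct and follows essentially the same route as the paper: your matrix $M_d(\ab)$ and its maximal nonvanishing minors are exactly the paper's wedge products $F = \tau_{1,1}F_1 \wedge \cdots \wedge \tau_{s,l_s}F_s$ and their nonzero coefficients on standard exterior monomials, so the open dense sets $U_d$ coincide. You also correctly isolate the only substantive point — reducing $\bigcap_{d\ge 0} U_d$ to a finite intersection — which the paper likewise handles by deferring to Fr\"oberg's finiteness-of-Hilbert-series argument rather than proving it in detail.
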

\begin{proof}
    Let
    \begin{align*}
        F_1 &= t_{1,1}x_1^{d_1} + t_{1,2}x_1^{d_1-1}x_2 + \cdots + t_{1,r_1}x_n^{d_1} \\
        &\vdots \\
        F_s &= t_{s,1}x_1^{d_s} + t_{s,2}x_1^{d_s-1}x_2 + \cdots + t_{s,r_s}x_n^{d_s}
    \end{align*}
    be polynomials in $\kk[\tbar][\xbar]$. For each degree $d$,
    we want to choose $\ab$ such that $\dim_{\kk} I_d$ is as large as possible.
    For any $\ab \in \Abb_{\kk}^N$ and $d$, $I_d$ is generated by
    \begin{equation*}
        \calA_d = \bigcup_{i=1}^{s}\{\tau f_i \mid \tau \text{ are monomials of degree } d -\deg(f_i) \geq 0\}
    \end{equation*}
    as a $\kk$-vector space. If there exists a point $\ab$ such that $\dim_{\kk}(I_d) = t$ for an integer $t$, then
    there exist monomials $\tau_{1,1}, \ldots, \tau_{1, l_1}, \ldots, \tau_{s, 1}, \ldots, \tau_{s, l_s}$ such that
   $\sum_{j=1}^{s}l_j = t$ and $\{\tau_{1,1}f_1, \ldots, \tau_{1,l_1}f_1, \ldots, \tau_{s,1}f_s, \ldots, \tau_{s,l_s}f_s\}$
   is a basis of $I_d$. Therefore,
   \begin{equation*}
    f = \tau_{1,1}f_1 \wedge \ldots \wedge \tau_{1,l_1}f_1 \wedge \ldots \wedge \tau_{s,1}f_s \wedge \ldots \wedge \tau_{s,l_s}f_s
   \end{equation*}
   is a basis of $\bigwedge^tI_d$. Since there exists a point $\ab$ such that $f$ is a basis of $\bigwedge^tI_d$,
   \begin{equation*}
    F = \tau_{1,1}F_1 \wedge \ldots \wedge \tau_{1,l_1}F_1 \wedge \ldots \wedge \tau_{s,1}F_s \wedge \ldots \wedge \tau_{s,l_s}F_s
   \end{equation*}
   is not zero, i.e., $F$ contains a standard exterior monomial with a nonzero coefficient that is in $\kk[\tbar]$
   when we expand $F$ and rewrite as a linear combination of standard exterior monomials.
   Let $t_d$ be the largest integer for each degree $d$ such that there exist monomials $\tau_{i,j}$ where $\sum_{j=1}^{s}l_j = t_d$ and
   $F$ is nonzero. Let $\calB_d$ be a set of such $F$'s.
   Also, let $U_d$ be the set of points in $\Abb_{\kk}^N$ that do not vanish on some
   coefficients of some $F \in \calB_d$.
   
   Finally, we prove $U = \bigcap_{d=0}^{\infty}U_d$ is a Zariski open dense set
   of $\Abb_{\kk}^N$. It is sufficient to prove that $U$ is the intersection of
   finitely many $U_d$'s because each $U_d$ is a Zariski open dense set of
   $\Abb_{\kk}^N$.
   The proof of finiteness is written in the proof of~\cite[Theorem 1]{FroebergGenericHilbertSeries}.
   The finiteness can be proved by the fact that only finitely
   many formal power series can appear as a Hilbert series $\HS(S/I; t)$ when $I$
   corresponds to $\ab \in \Abb_{\kk}^N$.
   
   Thus, there is a Zariski open dense set $U$ such that for all $\ab \in U$, the Hilbert series $\HS(S/I)$ is the same.
   In particular, if $I$ corresponds to some $\ab \in U$ and $I'$ corresponds to any $\ab \in \Abb_{\kk}^N$, then
   $\dim_{\kk}(S/I)_d \leq \dim_{\kk}(S/I')_d$ for all $d \geq 0$.
\end{proof}

\begin{theorem}[{\cite{PardueGenericSequence}}]\label{theorem:existence of initial ideal of generic case}
    We use the same notation as in Theorem~\ref{theorem:Froeberg generic hilbert series}.
    We fix positive integers $n, s, d_1, \ldots, d_s$ and a monomial order $\preceq$.
    Then, there is a Zariski open dense set $V \subset U$
    such that, for all $\ab \in V$, the initial ideal $\ini_{\preceq}(I)$ is the same.
\end{theorem}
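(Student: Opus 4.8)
The plan is to imitate the proof of Theorem~\ref{theorem:Froeberg generic hilbert series}, but this time recording the \emph{largest} standard exterior monomial that occurs in the relevant exterior product, not merely the fact that this product is nonzero. Fix a degree $d$. By Theorem~\ref{theorem:Froeberg generic hilbert series}, $\dim_{\kk}I_d = t_d$ for every $\ab \in U$, where $t_d$ is the integer produced in that proof; in particular $\bigwedge^{t_d}I_d$ is a $1$-dimensional $\kk$-vector space. Keep the monomials $\tau_{i,j}$ chosen there, for which $\sum_{j=1}^{s}l_j = t_d$ and
\[
    F = \tau_{1,1}F_1 \wedge \ldots \wedge \tau_{1,l_1}F_1 \wedge \ldots \wedge \tau_{s,1}F_s \wedge \ldots \wedge \tau_{s,l_s}F_s
\]
is nonzero. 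Expanding $F$ as a linear combination of standard exterior monomials with coefficients in $\kk[\tbar]$, let $m_d^{\ast}$ be the largest standard exterior monomial whose coefficient $c_d(\tbar) \in \kk[\tbar]$ is not identically zero, so that $\ini(F) = c_d(\tbar)\, m_d^{\ast}$.

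Next I would put $W_d := \{\ab \in \Abb_{\kk}^N \mid c_d(\ab) \neq 0\}$, a nonempty, hence Zariski open dense, subset of $\Abb_{\kk}^N$. Let $\ab \in U \cap W_d$ and set $f := \sigma_{\ab}(F)$. Since the coefficient of $m_d^{\ast}$ in $f$ is $c_d(\ab) \neq 0$, we have $f \neq 0$; as $f \in \bigwedge^{t_d}I_d$ and this space is $1$-dimensional, $f$ is a basis of it, so the $t_d$ monomials that are the factors of the leading standard exterior monomial of $f$ form a basis of $\ini_{\preceq}(I)_d$. Moreover every standard exterior monomial strictly larger than $m_d^{\ast}$ already has coefficient $0$ in $F$, hence coefficient $0$ in $f = \sigma_{\ab}(F)$; therefore the leading standard exterior monomial of $f$ is $m_d^{\ast}$ itself, \emph{independently of} $\ab$. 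Consequently $\ini_{\preceq}(I)_d$ equals one and the same $\kk$-subspace $L_d \subseteq S_d$ (with $\dim_{\kk}L_d = t_d$) for every $\ab \in U \cap W_d$.

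It then remains to take $V := U \cap \bigcap_{d \geq 0}W_d$ and check it is Zariski open dense. Because each $U \cap W_d$ is nonempty and $\Abb_{\kk}^N$ is irreducible, finite intersections of these sets are nonempty, and on them $\ini_{\preceq}(I)$ is an ideal with graded pieces $L_d$ in the relevant degrees; hence $S_1 \cdot L_d \subseteq L_{d+1}$ for all $d$, and $J := \bigoplus_{d}L_d$ is a fixed monomial ideal, independent of $\ab$. Since $J$ is finitely generated, say in degrees $\leq D_0$, I would show that $\ini_{\preceq}(I) = J$ already holds for $\ab \in U \cap \bigcap_{d \leq D_0}W_d$: there $\ini_{\preceq}(I)_d = L_d = J_d$ for $d \leq D_0$, so $\ini_{\preceq}(I) \supseteq J$, while $\dim_{\kk}\ini_{\preceq}(I)_d = \dim_{\kk}I_d = t_d = \dim_{\kk}J_d$ for every $d$ by Theorem~\ref{theorem:Froeberg generic hilbert series}, forcing $\ini_{\preceq}(I)_d = J_d$ in all degrees. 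Thus $V$ can be taken to be the finite intersection $U \cap \bigcap_{d \leq D_0}W_d$, which is Zariski open dense and satisfies $V \subseteq U$. (Equivalently, one can argue as in~\cite{FroebergGenericHilbertSeries} that only finitely many monomial ideals arise as $\ini_{\preceq}(I)$, all sharing the fixed Hilbert function of $S/I$.) The one genuinely non-formal step is this reduction of the infinite intersection to a finite one; everything else is the exterior-algebra bookkeeping that mirrors the proof of Theorem~\ref{theorem:Froeberg generic hilbert series}.
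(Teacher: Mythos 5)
Your proof is correct and follows essentially the same route as the paper's: expand a wedge of the $\tau_{i,j}F_i$'s over $\kk[\tbar]$, take the non-vanishing locus of the leading coefficient so that specialization commutes with taking the leading standard exterior monomial, and reduce the infinite intersection to a finite one via finite generation of $J$ together with the dimension count from Theorem~\ref{theorem:Froeberg generic hilbert series}. The only (harmless) deviation is that you fix a single $F$ per degree rather than maximizing over all of $\calB_d$ as the paper does to obtain the extremal monomial $m^{(d)}$ of equation~(\ref{equation:the largest standar exterior monomial}); this suffices for the statement itself, and since your open set meets the paper's, the two constructions yield the same initial ideal, though your argument does not directly record the maximality property that the paper uses later.
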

\begin{proof}
    We use the same notation as in the proof of Theorem~\ref{theorem:Froeberg generic hilbert series}.
    For each degree $d$, we can take the largest standard exterior
    monomial $m^{(d)}$ from the set of standard exterior monomials contained in some $F \in \calB_d$.
    In other words, for each $F \in \calB_d$, $\ini(F)$ is the product of a coefficient in $\kk[\tbar]$
    and a standard exterior monomial, which is the largest one in $F$. We can take $m^{(d)}$ as
    the largest standard exterior monomial from $\{\ini(F) \mid F \in \calB_d\}$.
    If $\ab \in U_d$, at least one of $F \in \calB_d$ becomes a basis of $\bigwedge^{t_d}I_d$.
    Then, one of standard exterior monomials of $F$ becomes a basis of $\bigwedge^{t_d}\ini_{\preceq}(I)_d$.
    Therefore, we have the following equation:
    \begin{equation}\label{equation:the largest standar exterior monomial}
    m^{(d)} = \max_{\ab \in U_d}
    \{m = m_1 \wedge \ldots \wedge m_{t_d} \mid m \text{ is a basis of } \bigwedge^{t_d}\ini_{\preceq}(I)_d\}.
    \tag{$\star$}
    \end{equation}
    Let $V_d$ be the set of points in $U_d$ that do not vanish on the coefficient of some $\ini(F)$
    whose standard exterior monomial is $m^{(d)}$.
   Let $J_d$ be a $\kk$-subspace of $S_d$ spanned by monomials forming $m_d^{(d)}$.
   
   We next show that $J = \bigoplus_{d=0}^{\infty}J_d$ is an ideal of $S$. It is sufficient
   to show $S_1J_d \subset J_{d+1}$ for each $d$. Since $V_d$ and $V_{d+1}$ are Zariski open
   dense sets, the intersection $V_d \cap V_{d+1}$ is not empty and there exists a point
   $\ab \in V_d \cap V_{d+1}$. If $I$ corresponds to $\ab$,
   we have $\ini_{\preceq}(I)_d = J_d$ and $\ini_{\preceq}(I)_{d+1} = J_{d+1}$,
   therefore the assertion is true.
   
   Finally, we prove $V = \bigcap_{d=0}^{\infty}V_d$ is a Zariski open dense set of $U_d$.
   As well the proof of Theorem~\ref{theorem:Froeberg generic hilbert series},
   it is sufficient to prove that $V$ is the intersection of finitely many $V_d$'s.
   The ideal $J$ is finitely generated. By the definition of $V_d$, we have $V_d \subset U_d$.
   Also, $U$ is the intersection of finitely many $U_d$'s.
   Therefore, there exists $e \geq 0$ such that $J$ is generated by at most degree $e$ monomials
   and $\bigcap_{d=0}^{e}V_d \subset \bigcap_{d=0}^{e}U_d = U$.

   We prove $V = \bigcap_{d=0}^{e}V_d$.
   If $\ab \in \bigcap_{d=0}^{e}V_d$ and $I$ corresponds to $\ab$, we have
   $\ini_{\preceq}(I)_d = J_d$ for all $d \leq e$. Therefore, $\ini_{\preceq}(I) \supset J$.
   Since $\ab \in U$, we have $\ini_{\preceq}(I)_d = J_d = t_d$ for all $d \in \intnonneg$.
   Thus, $\ini_{\preceq}(I) = J$.
\end{proof}

\begin{remark}
    By the definition of $U$ and $V$, these two Zariski open dense sets are not the
    same in general. As Example~\ref{example:U not equal to V}, there are two
    ideals $I$ and $J$ such that the two points corresponding to $I$ and $J$
    are in $U$ but the initial ideals of $I$ and $J$
    are not the same. To avoid confusion, we refer to the ideals in $U$ as
    \textit{$U$-generic ideals} and those in $V$ as \textit{$V$-generic ideals}.
    \froeberg conjecture is a conjecture about $U$-generic ideals and
    \moreno conjecture is a conjecture about $V$-generic ideals.
\end{remark}

\begin{example}\label{example:U not equal to V}
    Let $\kk = \QQ, n = s = 3$ and $d_1 = d_2 = d_3 = 2$.
    Let $I, J$ be the ideals in $\kk[x_1, x_2, x_3]$ defined by
    \begin{align*}
        I = (&x_1^2 + x_1x_3 + x_2x_3 + x_3^2, \\ 
        &x_1^2 + x_1x_2 + x_1x_3 + x_3^2, \\
        &x_1^2 + x_1x_2 - x_1x_3 + x_2^2 - x_2x_3 - x_3^2), \\
        J = (&x_1^2 + x_1x_3 + x_2^2 + x_2x_3 + x_3^2, \\ 
        &x_1x_2 + x_1x_3 - x_2^2 + x_2x_3 + x_3^2, \\
        &x_1^2 + x_1x_2 + x_1x_3 + x_2x_3 + x_3^2),
    \end{align*}
    respectively. Then, the Hilbert series of $I$ and $J$ is the same $t^3 + 3t^2 + 3t + 1$.
    Therefore, both of two ideals are $U$-generic because
    a $U$-generic ideal is generated by regular sequence in the case $s \leq n$.
    However, initial ideals with respect to the degree reverse lexicographic order $\preceq$
    are not the same as shown below:
    \begin{align*}
        \ini_{\preceq}(I) &= (x_1^2, x_1x_2, x_2^2, x_1x_3^2, x_2x_3^2, x_3^4), \\
        \ini_{\preceq}(J) &= (x_1^2, x_1x_2, x_1x_3, x_2^3, x_2^2x_3, x_2x_3^2, x_3^4).
    \end{align*}
    Therefore, one of $I$ and $J$ is not a $V$-generic ideal but both of
    $I$ and $J$ are $U$-generic ideals.
\end{example}

We denote $\GL_n(\kk)$ as the general linear group, which is the group of all invertible
$n \times n$-matrices over $\kk$. Also, we denote $\borel_n(\kk)$ as the Borel subgroup,
which is the subgroup of $\GL_n(\kk)$ consisting of all invertible upper triangular matrices.
The general linear group acts on the polynomial ring as follows.
For $\alpha = (\alpha_{i,j}) \in \GL_n(\kk)$ and $f(x_1, \ldots, x_n) \in S$,
let $\alpha$ act on $f$ by $\alpha \cdot f = f(\alpha \cdot x_1, \ldots, \alpha \cdot x_n)$,
where $\alpha \cdot x_j = \sum_{i=1}^{n}\alpha_{i,j}x_i$. Let
$\alpha \cdot I = \{\alpha \cdot f \mid f \in I\}$ for
a matrix $\alpha \in \GL_n(\kk)$ and an ideal $I \subset S$. We say that $I$ is \textit{Borel-fixed}
if $\alpha \cdot I = I$ for all $\alpha \in \borel_n(\kk)$.

\begin{theorem}\label{theorem:main theorem1}
    We fix positive integers $n, s, d_1, \ldots, d_s$ and a monomial order $\preceq$.
    Then, the initial ideal of $V$-generic ideals is Borel-fixed.
\end{theorem}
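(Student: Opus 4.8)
The plan is to identify the common initial ideal $J:=\ini_{\preceq}(I)$ of the $V$-generic ideals, where $V$ is the Zariski open dense set of Theorem~\ref{theorem:existence of initial ideal of generic case}, with the generic initial ideal $\operatorname{gin}_{\preceq}(I)$ of one fixed $V$-generic ideal $I$, and then to invoke the theorem of Galligo~\cite{Galligo}---in the generality established by Bayer and Stillman~\cite{BayerStillman} for an arbitrary monomial order and arbitrary characteristic, see also~\cite[Section~15.9]{EisenbudText}---that $\operatorname{gin}_{\preceq}(I)$ is Borel-fixed. Informally, what has to be shown is that a $V$-generic ideal is automatically already in generic coordinates.

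First I would spell out the action of $\GL_n(\kk)$ on the parameter space $\Abb_{\kk}^N$. If $\ab\in\Abb_{\kk}^N$ corresponds to $I=(f_1,\dots,f_s)$ and $g\in\GL_n(\kk)$, then each $g\cdot f_i$ is again a form of degree $d_i$, so $g\cdot I$ corresponds to a well-defined point $g\star\ab\in\Abb_{\kk}^N$; concretely, $g$ acts on the degree-$d_i$ piece $S_{d_i}$ by an invertible $r_i\times r_i$ matrix, and $g\star\ab$ is obtained from $\ab$ by applying these matrices block by block. In the language of this section this is exactly the observation, already used for $\gamma\in\borel_n(\kk)$, that $g\cdot F_i$ is again a generic form, i.e.\ a $\kk$-linear reparametrization of $F_i$. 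Two consequences follow: for fixed $g$ the map $\ab\mapsto g\star\ab$ is a linear automorphism of $\Abb_{\kk}^N$, and for fixed $\ab$ the map $g\mapsto g\star\ab$ is a morphism $\GL_n(\kk)\to\Abb_{\kk}^N$.

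Next, fix a $V$-generic ideal $I$, let $\ab_0\in V$ be a corresponding point, and set $W:=\{\,g\in\GL_n(\kk): g\star\ab_0\in V\,\}$. Since $g\mapsto g\star\ab_0$ is a morphism and $V$ is Zariski open, $W$ is Zariski open; it contains the identity, hence is nonempty, hence dense, because $\GL_n(\kk)$ is irreducible and $\kk$ is infinite. For each $g\in W$ the ideal $g\cdot I$ corresponds to $g\star\ab_0\in V$, hence is $V$-generic, so $\ini_{\preceq}(g\cdot I)=J$. On the other hand, Galligo's theorem furnishes a Zariski open dense set $W'\subseteq\GL_n(\kk)$ on which $\ini_{\preceq}(g\cdot I)$ is constant, equal to $\operatorname{gin}_{\preceq}(I)$, and with $\operatorname{gin}_{\preceq}(I)$ Borel-fixed. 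Since $W\cap W'$ is again Zariski open dense, hence nonempty, any $g\in W\cap W'$ gives $J=\ini_{\preceq}(g\cdot I)=\operatorname{gin}_{\preceq}(I)$, and therefore $J$ is Borel-fixed.

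The only step carrying real depth is the appeal to Galligo's theorem, which is used here as a black box; everything else is routine, the one thing to verify being that $\GL_n(\kk)$ acts on $\Abb_{\kk}^N$ by automorphisms compatibly with its action on ideals, and this is immediate because a linear change of variables carries a form of degree $d_i$ to a form of degree $d_i$ invertibly. If one prefers a proof that does not cite Galligo, the argument can be run entirely inside the exterior-algebra framework of this section, reproving the Borel-fixedness of $\operatorname{gin}_{\preceq}$ along the way; the delicate ingredient then is the interaction between the monomial order and the Borel action---that the largest standard exterior monomial of $\bigwedge^{t}I_d$ is maximal when the coordinates are generic and is Borel-invariant there---which is handled by the usual limiting argument via one-parameter subgroups.
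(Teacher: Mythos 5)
Your proof is correct, but it takes a genuinely different route from the paper's. The paper proves Borel-fixedness directly inside its exterior-algebra framework: it acts with an elementary matrix $\gamma_{i,j}^c$ (and a suitable diagonal matrix $\delta$) on the maximal standard exterior monomial $m^{(d)}$ of $\bigwedge^{t_d}J_d$, observes that $\gamma_{i,j}^c\delta\cdot I$ corresponds to another point $\bb\in U$ because a linear change of variables preserves the degrees $d_i$ and the dimensions $\dim_\kk I_d$, and derives a contradiction with the maximality property $(\star)$ of $m^{(d)}$ over all of $U_d$ if $\gamma_{i,j}^c\cdot J_d\neq J_d$ --- i.e.\ it re-runs the Eisenbud--Galligo argument with ``generic coordinates'' replaced by ``generic coefficients.'' You instead prove that a $V$-generic ideal is automatically in generic coordinates: the orbit map $g\mapsto g\star\ab_0$ is a morphism, so $W=\{g: g\star\ab_0\in V\}$ is open, contains the identity, and meets the open dense set $W'$ furnished by Galligo's theorem, whence $J=\ini_{\preceq}(g\cdot I)=\operatorname{gin}_{\preceq}(I)$ for any $g\in W\cap W'$ and Borel-fixedness follows by citation. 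Your reduction is shorter and cleanly isolates the genuinely new observation (the $\GL_n$-orbit of a $V$-generic point stays $V$-generic on a dense open set of the group); the paper's argument is longer but self-contained within the machinery it has already set up and does not need the existence of the gin as a black box. The key common ingredient --- that the $\GL_n$-action on forms descends to an action on the parameter space $\Abb_\kk^N$ preserving membership in $U$ --- appears in both proofs, and you have verified it. One small point worth making explicit if you write this up: the density of a nonempty Zariski open subset of $\GL_n(\kk)$ for $\kk$ infinite but not algebraically closed should be justified in the same way the paper handles density in $\Abb_\kk^N$ (a nonzero polynomial does not vanish identically on $\kk^{n^2}$), but this is routine.
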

\begin{proof}
    We use the same notation as in the proof of Theorem~\ref{theorem:Froeberg generic hilbert series}
    and Theorem~\ref{theorem:existence of initial ideal of generic case}.
    Let $I$ be a $V$-generic ideal, and $J = \ini_{\preceq}(I)$.
    For $i, j$ with $i < j$, let $\gamma_{i,j}^c = 1 + \gamma$  where $1$ is the identity matrix
    and $\gamma$ is a strictly upper triangular matrix with exactly one nonzero entry, $(i,j)$,
    equal to $c$. Since $\borel_n(\kk)$ is generated by such matrices and diagonal matrices,
    it is sufficient to show that $J$ is fixed under the action of these matrices.
    We know that $J$ is fixed under the action of diagonal matrices because $J$ is a monomial
    ideal (see Proposition~\ref{prop:characterization of Borel-fixed}).
    
    We prove $\gamma_{i,j}^c \cdot J = J$ for any $\gamma_{i,j}^c$. It is sufficient to show
    $\gamma_{i,j}^c \cdot J_d = J_d$ for all $d$.
    Let $h = h_1 \wedge \ldots \wedge h_{t_d}$ be a basis of $\bigwedge^{t_d}I_d$
    and assume $m^{(d)} = \ini_{\preceq}(h_1) \wedge \ldots \wedge \ini_{\preceq}(h_{t_d})$,
    that is a basis of $\bigwedge^{t_d}J_d$. Since $J$ is the initial ideal of $V$-generic ideals,
    $m^{(d)}$ satisfies the equation~(\ref{equation:the largest standar exterior monomial})
    for all $d$.
    Then,
    \begin{equation*}
        \gamma_{i,j}^c \cdot m^{(d)} = \gamma_{i,j}^c \cdot m_1^{(d)} \wedge \ldots \wedge \gamma_{i,j}^c \cdot m_{t_d}^{(d)}
    \end{equation*}
    is a basis of $\bigwedge^{t_d}\gamma_{i,j}^c \cdot J_d$.
    We assume $\gamma_{i,j}^c \cdot m^{(d)} \neq m^{(d)}$.
    There exists a term in $\gamma_{i,j}^c \cdot m^{(d)}$ that has nonzero coefficient
    and larger than $m^{(d)}$. Since $\gamma_{i,j}^c$ is an upper triangular matrix, all terms
    in $\gamma_{i,j}^c \cdot m^{(d)}$ except for $m^{(d)}$ are strictly larger than $m^{(d)}$.
    Let $m$ be one of the standard exterior monomials in $\gamma_{i,j}^c \cdot m^{(d)}$ with nonzero
    coefficient. Let $a$ be the coefficient of $m$ in $\gamma_{i,j}^c \cdot m^{(d)}$.
    If there exists a diagonal matrix $\delta \in \GL_n(\kk)$ such that
    $\gamma_{i,j}^c \delta \cdot h$ has standard exterior monomial $m$ with nonzero coefficient,
    this contradicts the equation~(\ref{equation:the largest standar exterior monomial}).
    The reason is as follows: if $I$ is generated by homogeneous polynomials $f_1, \ldots, f_s$
    with $\deg(f_i) = d_i$, $\gamma_{i,j}^c \delta \cdot I$ is generated by homogeneous
    polynomials $\gamma_{i,j}^c \delta \cdot f_1, \ldots, \gamma_{i,j}^c \delta \cdot f_s$ with
    $\deg(\gamma_{i,j}^c \delta \cdot f_i) = d_i$. Therefore, $\gamma_{i,j}^c \delta \cdot I$
    corresponds to another point $\bb \in \Abb_{\kk}^N$. Moreover, we know
    $\dim_{\kk}(I_d) = \dim_{\kk}(\gamma_{i,j}^c \delta \cdot I_d)$ because
    $\gamma_{i,j}^c \delta \in \GL_n(\kk)$. Thus, we have $\bb \in U \subset U_d$ for all $d$.

    Finally, we have to show the existence of a diagonal matrix $\delta$.
    The proof of this is the same as the proof of~\cite[Theorem 15.20]{EisenbudText}.
    Thus, we have $\gamma_{i,j}^c \cdot J = J$ for all $d$ and $J$ is Borel-fixed.
\end{proof}

The combinatorial characterization of Borel-fixed monomial ideals is well known.
For $s, t \in \intnonneg$, we say that $s \preceq_p t$ if
$\binom{t}{s} \not\equiv 0 \pmod p$.
If $p = 0$, then $\preceq_p$ is the usual total order $\leq$.

\begin{proposition}[{\cite[Theorem 15.23]{EisenbudText}}]\label{prop:characterization of Borel-fixed}
    Let $J \subset S$ be an ideal, and let $p \geq 0$ be the characteristic of $\kk$.
    \begin{enumerate}
        \item $J$ is fixed under the action of diagonal matrices if and only if $J$ is generated
        by monomials.
        \item $J$ is fixed under the action of $\borel_n(\kk)$ if and only if $J$ is generated by
        monomials and the following condition is satisfied for all $i < j$ and all monomial
        generators $m$ of $J$:
        If $m$ is divisible by $x_j^t$ but not by $x_j^{t+1}$, then $(x_i / x_j)^s m \in J$
        for all $i < j$ and $s \preceq_p t$.
    \end{enumerate}
\end{proposition}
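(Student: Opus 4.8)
The plan is to reduce both statements to the action of an explicit generating set of $\borel_n(\kk)$ and then to compute directly how those generators move a single monomial.

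\emph{Part (1).} The ``if'' direction is immediate: if $J$ is a monomial ideal and $\delta = \mathrm{diag}(c_1,\dots,c_n)\in\GL_n(\kk)$, then $\delta$ scales the monomial $x_1^{a_1}\cdots x_n^{a_n}$ by the unit $\prod_i c_i^{a_i}$, so $\delta\cdot J = J$. For the ``only if'' direction I would take $f\in J$, write $f=\sum_m \lambda_m m$ as a $\kk$-combination of pairwise distinct monomials $m$, and show each $m$ with $\lambda_m\neq 0$ already lies in $J$. Distinct monomials have distinct exponent vectors, so the characters of the diagonal torus afforded by them are linearly independent; since $\kk$ is infinite, finitely many diagonal matrices $\delta_1,\dots,\delta_k$ make the matrix of the corresponding eigenvalues invertible over $\kk$, and inverting it expresses each $m$ as a $\kk$-combination of the $\delta_\ell\cdot f\in J$.

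\emph{Part (2).} Since the diagonal matrices lie in $\borel_n(\kk)$, part (1) forces a Borel-fixed ideal to be a monomial ideal, and conversely a monomial ideal is already fixed by the torus; as $\borel_n(\kk)$ is generated by the diagonal matrices together with the matrices $\gamma_{i,j}^c := 1 + cE_{i,j}$ ($i<j$, $c\in\kk$), only the action of the latter remains in question. Each $\gamma_{i,j}^c$ is an invertible, degree-preserving ring automorphism of $S$, so $\gamma_{i,j}^c\cdot J = J$ is equivalent to $\gamma_{i,j}^c\cdot J\subseteq J$, which can be tested on the minimal monomial generators of $J$. Under the paper's convention $\gamma_{i,j}^c\cdot x_\ell = x_\ell$ for $\ell\neq j$ and $\gamma_{i,j}^c\cdot x_j = x_j + cx_i$; hence for a monomial $m$ divisible by $x_j^t$ but not by $x_j^{t+1}$,
\[
    \gamma_{i,j}^c\cdot m \;=\; \frac{m}{x_j^{t}}\,(x_j+cx_i)^{t} \;=\; \sum_{s=0}^{t}\binom{t}{s}c^{s}\Bigl(\frac{x_i}{x_j}\Bigr)^{s}m .
\]
A polynomial lies in a monomial ideal iff every one of its monomials does; fixing any $c\neq 0$, the coefficient $\binom{t}{s}c^{s}$ of the (distinct) monomial $(x_i/x_j)^s m$ is nonzero precisely when $\binom{t}{s}\not\equiv 0\pmod p$, i.e. when $s\preceq_p t$. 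Thus $\gamma_{i,j}^c\cdot m\in J$ for all $c$ if and only if $(x_i/x_j)^s m\in J$ for every $s\preceq_p t$; taking the conjunction over all $i<j$ and all minimal generators $m$, and combining with torus-invariance and the generation of $\borel_n(\kk)$, gives exactly the asserted equivalence.

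I expect the only step needing a genuine (if short) argument to be the Vandermonde-type separation of monomials of distinct multidegree in part (1); this is where the hypothesis that $\kk$ is infinite is used. Everything else is bookkeeping: the standard fact that $\borel_n(\kk)$ is generated by the torus and the $\gamma_{i,j}^c$, and the recognition of the binomial coefficients $\binom{t}{s}$ in the expansion of $\gamma_{i,j}^c\cdot m$ — which is precisely what converts the group action into the modular condition $s\preceq_p t$.
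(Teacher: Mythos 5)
Your proof is correct and is essentially the standard argument from the source the paper cites for this result (Eisenbud, Theorem 15.23): the paper itself gives no proof, relying on that reference, and your two steps --- separating the monomials of an invariant polynomial via linear independence of the torus characters (where infiniteness of $\kk$ enters), and expanding $\gamma_{i,j}^c\cdot m$ to read off the condition $\binom{t}{s}\not\equiv 0\pmod p$ --- are exactly the textbook route. No gaps.
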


If a monomial ideal $J$ is weakly reverse lexicographic, then $J$ is Borel-fixed because
$(x_i / x_j)^s m$ is larger than $m$ with respect to the degree reverse lexicographic order,
under the assumption in (2) of Proposition~\ref{prop:characterization of Borel-fixed}.
Therefore, Theorem~\ref{theorem:main theorem1} does not contradict \moreno conjecture.
Theorem~\ref{theorem:main theorem1} can be considered as a weakened version of
\moreno conjecture, generalized to arbitrary monomial order.

\section{the initial ideal of generic ideals with respect to the lexicographic order}\label{section:the initial ideal of generic ideals with respect to lexicographic order}
In this section, we consider the initial ideal of $U$-generic ideals with respect to the lexicographic order
and the relationship between them and lexsegment ideals.
We introduce the definition of lexsegment ideals
and Macaulay's Theorem.
For more details on lexsegment ideals and Macaulay's Theorem, see~\cite[Section 6.3]{MonomialIdeals}
or~\cite[Section 2.4]{MillerSturmfels}.

\begin{definition}\label{definition:lexsegment ideal}
    Let the monomial order $\preceq$ be the lexicographic order.
    A monomial ideal $J$ is called the \textit{lexsegment ideal} if,
    for all monomials $m \in J$, any monomial $m'$ satisfying
    $\deg(m) = \deg(m')$ and $m \preceq m'$ also belongs to $J$.
\end{definition}

\begin{theorem}[Macaulay's Theorem]\label{theorem:Macaulay's Theorem}
    For any homogeneous ideal $I$, there uniquely exists a lexsegment ideal $J$ such that $\HS(S/I; t) = \HS(S/J; t)$.
    Moreover, for any $d \geq 0$, $J$ has at least as many generators in degree $d$ as any other monomial
    ideal with the same Hilbert series.
\end{theorem}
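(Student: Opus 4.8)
### Proof proposal for Macaulay's Theorem

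The plan is to prove this in two parts: first the existence and uniqueness of a lexsegment ideal $J$ with prescribed Hilbert function, and then the maximality statement about the number of generators in each degree. Since I may invoke results stated earlier, I should note that the theorem as stated packages together what is classically Macaulay's characterization of Hilbert functions together with the refinement due to Bigatti–Hulett–Pardue on Betti numbers of lexsegment ideals; I would in fact just cite the references given (\cite[Section 6.3]{MonomialIdeals} or \cite[Section 2.4]{MillerSturmfels}) for the heavy lifting, and present the argument at the level of a guided reconstruction.

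For existence and uniqueness, the key object is the \emph{lex-segment space} $L_d \subseteq S_d$: for a given target dimension sequence, $L_d$ is spanned by the $\HF(S/I,d)'$-many largest monomials of degree $d$ in the lexicographic order, where the number of monomials one removes is $\dim_\kk S_d - \HF(S/I,d)$. The only thing to check is that $L = \bigoplus_d L_d$ is an ideal, i.e. $S_1 L_d \subseteq L_{d+1}$; equivalently, writing $N_d$ for the \emph{complementary} set of monomials (those \emph{not} in $L_d$, the $\HF(S/I,d)$ smallest ones), one must show that if $N_d$ is a lex-segment of monomials of degree $d$ then the set $\{x_i m \mid m \in N_d, 1 \le i \le n\}$ of degree $d+1$ monomials, after taking the lex-smallest $\HF(S/I,d+1)$ of them, still contains $S_1 \cdot (\text{span of the lex-segment complement})$ — this is precisely the combinatorial content of Macaulay's inequality bounding $\HF(S/I,d+1)$ in terms of $\HF(S/I,d)$ via Macaulay representations (binomial expansions). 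Since $S/I$ itself witnesses that this inequality is satisfied by the sequence $\HF(S/I,-)$, the lex-segment spaces assemble into an ideal. Uniqueness is immediate: a lexsegment ideal in degree $d$ is determined by its dimension, so two lexsegment ideals with the same Hilbert function coincide in every degree.

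For the maximality of the number of degree-$d$ generators, the standard approach is the following. Let $J$ be the lexsegment ideal and $J'$ any monomial ideal with $\HS(S/J;t) = \HS(S/J';t)$. The number of minimal generators of a monomial ideal $K$ in degree $d$ equals $\dim_\kk K_d - \dim_\kk (S_1 K_{d-1})$. Since $\dim_\kk K_d = \HF(S/\!\!-\,\text{the same for both})$ is fixed, it suffices to show $\dim_\kk(S_1 J_{d-1}) \le \dim_\kk(S_1 J'_{d-1})$, i.e. the lexsegment ideal has the \emph{smallest} possible degree-$d$ part of $S_1 K_{d-1}$ among ideals with the given Hilbert function — equivalently, lex-segments minimize the size of the "shadow" $S_1 K_{d-1}$, which is again a consequence of Macaulay's theorem applied to the truncated ideal generated by $K_{d-1}$ (the shadow of a set of $\HF(S/K,d-1)^c$-many monomials is smallest when that set is a lex-segment). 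Combining, $J$ has at least as many generators in each degree $d$ as $J'$ does.

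The main obstacle is the combinatorial heart of the first part: verifying that the lex-segment dimensions actually produce an ideal requires exactly Macaulay's growth bound on Hilbert functions (the $d \mapsto d+1$ inequality on Macaulay representations), and verifying that lex-segments minimize shadows. Both are genuinely nontrivial binomial-coefficient estimates. Rather than reprove them, I would state them cleanly as the two lemmas they are and cite \cite[Section 6.3]{MonomialIdeals} (Macaulay's bound and the Bigatti–Hulett–Pardue shadow-minimality) for the proofs, since reproducing them in full would duplicate standard textbook material and is not the point of this paper — the theorem is quoted here only to underpin Corollary~\ref{corollary:maxGBdeg bound with lexsegment ideal}.
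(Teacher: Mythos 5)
The paper offers no proof of this theorem at all---it is quoted as a classical result with pointers to the cited references---so your outline is, if anything, more detailed than the paper's treatment; it is a correct reconstruction of the standard argument (Macaulay's growth bound to show the lex-segment spaces assemble into an ideal, uniqueness by dimension count in each degree, and shadow minimality of lex-segments to get the generator count via $\dim_\kk K_d - \dim_\kk (S_1K_{d-1})_d$), and deferring the two combinatorial lemmas to the textbook references matches what the paper itself does. One minor point: the generator-maximality claim is only the $\beta_0$ statement and already follows from Macaulay's shadow bound in arbitrary characteristic, so invoking Bigatti--Hulett--Pardue is more machinery than is needed here.
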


\begin{definition}
    Given a Hilbert series $\HS(S/I; t)$ for a homogeneous ideal $I$,
    we define the lexsegment ideal $J$ induced by Macaulay's Theorem
    (i.e., satisfying $\HS(S/J; t) = \HS(S/I; t)$)
    as \textit{the lexsegment ideal of $\HS(S/I; t)$}.
\end{definition}

We define $\maxGBdeg_{\preceq}(I)$ as the maximal degree of polynomials in the reduced \groebner basis
and $\maxdeg(I)$ as the maximal degree of minimal generators of $I$.
We can bound $\maxGBdeg_{\preceq}(I)$ with the lexsegment ideals of the same Hilbert series
$\HS(S/I; t)$.

\begin{corollary}\label{corollary:maxGBdeg bound with lexsegment ideal}
    For any homogeneous ideal $I$, let $J$ be the lexsegment ideal of
    $\HS(S/I; t)$. Then, we have $\maxGBdeg_{\preceq}(I) \leq \maxdeg(J)$ for any monomial order $\preceq$.
\end{corollary}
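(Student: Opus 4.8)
The plan is to route the inequality through the initial ideal $\ini_{\preceq}(I)$, exploiting that it is a monomial ideal sharing its Hilbert series with $S/I$, and then applying the extremal clause of Macaulay's Theorem. Concretely, I would establish the chain
\[
\maxGBdeg_{\preceq}(I) \;=\; \maxdeg(\ini_{\preceq}(I)) \;\leq\; \maxdeg(J),
\]
the first relation being an equality valid for every monomial order and every homogeneous ideal, and the second coming from Theorem~\ref{theorem:Macaulay's Theorem}.

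First I would prove $\maxGBdeg_{\preceq}(I) = \maxdeg(\ini_{\preceq}(I))$. Let $\calG = \{g_1, \ldots, g_t\}$ be the reduced \groebner basis of $I$ with respect to $\preceq$. By the definition of a reduced \groebner basis, no $\ini_{\preceq}(g_j)$ divides any monomial appearing in $g_i$ for $i \neq j$; in particular no $\ini_{\preceq}(g_j)$ divides $\ini_{\preceq}(g_i)$. Hence $\{\ini_{\preceq}(g_1), \ldots, \ini_{\preceq}(g_t)\}$ is an antichain under divisibility that generates $\ini_{\preceq}(I)$, so it is exactly the minimal monomial generating set of $\ini_{\preceq}(I)$. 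Since $I$ is homogeneous, each $g_i$ is homogeneous and $\deg(g_i) = \deg(\ini_{\preceq}(g_i))$, whence $\maxGBdeg_{\preceq}(I) = \max_i \deg(g_i) = \max_i \deg(\ini_{\preceq}(g_i)) = \maxdeg(\ini_{\preceq}(I))$. (The degenerate cases $I = 0$ and $I = S$ are trivial and can be dispatched separately.)

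Next I would compare $\ini_{\preceq}(I)$ with $J$. The standard monomials, i.e.\ those not lying in $\ini_{\preceq}(I)$, form a $\kk$-basis of $(S/I)_d$ for every $d$, so $\HF(S/\ini_{\preceq}(I), d) = \HF(S/I, d)$ for all $d$, and therefore $\HS(S/\ini_{\preceq}(I); t) = \HS(S/I; t)$. Thus $J$ is also the lexsegment ideal of $\HS(S/\ini_{\preceq}(I); t)$, and $\ini_{\preceq}(I)$ is a monomial ideal with this very Hilbert series. Put $e = \maxdeg(\ini_{\preceq}(I))$; by definition $\ini_{\preceq}(I)$ has at least one minimal generator in degree $e$, so the last sentence of Theorem~\ref{theorem:Macaulay's Theorem} forces $J$ to have at least as many minimal generators in degree $e$, hence at least one, giving $\maxdeg(J) \geq e$. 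Combining with the previous step yields $\maxGBdeg_{\preceq}(I) = e = \maxdeg(\ini_{\preceq}(I)) \leq \maxdeg(J)$.

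I do not expect a genuine obstacle: the statement is a formal consequence of Macaulay's Theorem together with the standard dictionary between reduced \groebner bases and minimal generators of initial ideals. The only point that wants a little care is that the comparison of generators must be made degree by degree, in the specific degree $e = \maxdeg(\ini_{\preceq}(I))$, and that the generator-count clause of Theorem~\ref{theorem:Macaulay's Theorem} applies to $\ini_{\preceq}(I)$ precisely because it has the same Hilbert series as $S/I$; both of these are handled in the paragraph above.
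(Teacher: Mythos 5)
Your proof is correct and follows essentially the same route as the paper: the identity $\maxGBdeg_{\preceq}(I)=\maxdeg(\ini_{\preceq}(I))$ from the reduced \groebner basis, the equality of Hilbert series $\HS(S/I;t)=\HS(S/\ini_{\preceq}(I);t)$, and the generator-count clause of Macaulay's Theorem. You simply spell out the degree-by-degree application of Macaulay's Theorem in the top degree $e$, which the paper leaves implicit.
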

\begin{proof}
    Fix a monomial order $\preceq$. By the definition of reduced \groebner basis,
    we have $\maxGBdeg_{\preceq}(I) = \maxdeg(\ini_{\preceq}(I))$.
    It is well known that $\HS(S/I; t) = \HS(S/\ini_{\preceq}(I); t)$ for
    any homogeneous ideal $I$ (see, e.g.,~\cite[Theorem 15.26]{EisenbudText}).
    Therefore, by Macaulay's Theorem, we have
    \begin{equation*}
        \maxGBdeg_{\preceq}(I) = \maxdeg(\ini_{\preceq}(I)) \leq \maxdeg(J).
    \end{equation*}
\end{proof}

We check whether the bound in Corollary~\ref{corollary:maxGBdeg bound with lexsegment ideal}
is optimal. If there exist an ideal $I$ and a monomial order $\preceq$ such that $\ini_{\preceq}(I)$
is the lexsegment ideal of $\HS(S/I; t)$, then the equality of the inequality in Corollary~\ref{corollary:maxGBdeg bound with lexsegment ideal}
holds. Thus, we want to find such an ideal.
We fix the lexsegment ideal and compare homogeneous ideals $I$ under
the fixed Hilbert series. For simplicity, we assume $I$ is a $U$-generic ideal.
Because of Theorem~\ref{theorem:main theorem2}, we can either find ideals $I$ whose
initial ideals are lexsegment for the generic case, or determine that such ideals
do not exist at all.

\begin{theorem}\label{theorem:main theorem2}
    Fix $n, s, d_1, \ldots, d_s$. Let the monomial order be the lexicographic order.
    If there exists $\ab \in U$ such that the initial ideal of $I$ is lexsegment,
    then the initial ideal of $V$-generic ideals is lexsegment.
\end{theorem}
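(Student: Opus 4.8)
The plan is to combine two extremal characterizations: the description of the $V$-generic initial ideal through equation~(\ref{equation:the largest standar exterior monomial}), and the fact that in each degree a lexsegment ideal occupies the lexicographically largest possible monomials. Keep the notation of the proofs of Theorem~\ref{theorem:Froeberg generic hilbert series} and Theorem~\ref{theorem:existence of initial ideal of generic case}, so that $t_d$, $U_d$, $m^{(d)}$ are as defined there and $J = \bigoplus_{d \ge 0} J_d$ is the initial ideal of $V$-generic ideals, with $J_d$ spanned by the monomials forming $m^{(d)}$. Let $\ab_0 \in U$ be a point such that, for the ideal $I_0$ it corresponds to, $\ini_{\preceq}(I_0)$ is lexsegment. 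Since $J$ is a monomial ideal and is already known to be an ideal by Theorem~\ref{theorem:existence of initial ideal of generic case}, it suffices to show that for every $d$ the space $J_d$ is spanned by the $t_d$ lexicographically largest monomials of degree $d$; this is precisely the condition in Definition~\ref{definition:lexsegment ideal}.

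First I would record the dimension in each degree. Because $\HS(S/I_0;t) = \HS(S/\ini_{\preceq}(I_0);t)$ and $\ab_0 \in U$, we have $\dim_{\kk}\ini_{\preceq}(I_0)_d = \dim_{\kk}(I_0)_d = t_d$ for all $d$ (and $t_d \le \dim_{\kk} S_d$ automatically, since $\ini_{\preceq}(I_0)_d \subseteq S_d$). As $\ini_{\preceq}(I_0)$ is lexsegment, $\ini_{\preceq}(I_0)_d$ is therefore spanned exactly by the $t_d$ lex-largest monomials of degree $d$; write $w^{(d)} = \mu_1 \wedge \cdots \wedge \mu_{t_d}$ for the corresponding standard exterior monomial, where $\mu_1 \succ \cdots \succ \mu_{t_d}$ are those monomials.

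Next I would check that $w^{(d)}$ is the largest standard exterior monomial of length $t_d$ in degree $d$. Such a monomial is a strictly $\succ$-decreasing tuple of $t_d$ degree-$d$ monomials, ordered lexicographically on the tuple; a routine induction on the coordinates shows that the greedy tuple $(\mu_1, \dots, \mu_{t_d})$ dominates every other one. Hence $w^{(d)}$ is the global maximum among all such standard exterior monomials.

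Finally I would combine these. Since $\ab_0 \in U = \bigcap_d U_d$, in particular $\ab_0 \in U_d$, and for $I_0$ the one-dimensional space $\bigwedge^{t_d}\ini_{\preceq}(I_0)_d$ is generated by $w^{(d)}$; so equation~(\ref{equation:the largest standar exterior monomial}) yields $m^{(d)} \ge w^{(d)}$. Together with the global maximality of $w^{(d)}$ this forces $m^{(d)} = w^{(d)}$, so $J_d$ is spanned by the $t_d$ lex-largest monomials of degree $d$, as needed; hence $J$ is a lexsegment ideal, and in fact the lexsegment ideal of the generic Hilbert series. I do not anticipate a serious obstacle; the two steps requiring the most care are making the greedy comparison of standard exterior monomials rigorous and checking at the outset that each graded piece of $I_0$ and of its initial ideal has the same dimension $t_d$ (which is exactly where the hypothesis $\ab_0 \in U$ enters).
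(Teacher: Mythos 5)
Your proposal is correct and follows essentially the same route as the paper: both arguments observe that the exterior monomial attached to a lexsegment initial ideal is the global maximum among standard exterior monomials of length $t_d$ in degree $d$, and then invoke equation~(\ref{equation:the largest standar exterior monomial}) at the point $\ab_0 \in U \subset U_d$ to force $m^{(d)}$ to equal it. Your write-up merely makes explicit two points the paper leaves implicit (the dimension count $\dim_\kk \ini_\preceq(I_0)_d = t_d$ and the greedy comparison of exterior monomials), which is fine.
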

\begin{proof}
    Since the monomial order $\preceq$ is lexicographic order, the largest
    standard exterior monomial in $m_{\mathrm{lex}}^{(d)} \in \bigwedge^{t_d}S_d$ consists of
    the $t_d$ largest monomials in $S_d$ with respect to the lexicographic order
    for each $d$. Let $J$ be the initial ideal of $V$-generic ideals and
    $m^{(d)}$ a basis of $\bigwedge^{t_d}J_d$.
    By the equation~(\ref{equation:the largest standar exterior monomial}),
    we find that
    \begin{equation*}
        m_{\mathrm{lex}}^{(d)} \geq m^{(d)} = \max_{\ab \in U_d}
        \{m = m_1 \wedge \ldots \wedge m_{t_d} \mid m \text{ is a basis of } \bigwedge^{t_d}\ini_{\preceq}(I)_d\}.
    \end{equation*}
    Thus, if there exists $\ab \in U$ such that the initial ideal of $I$ is
    lexsegment, then $J$ is lexsegment.
\end{proof}

We compute the \groebner basis of a set of randomly chosen polynomials.
For example, let $\kk = \QQ, n = 3, s = 2$ and $d_1, d_2 = 2$.
In this case, $r_1 = r_2 = 6$ and $N=12$. We take a point randomly.
For example, we take
\begin{equation*}
    \ab = (8, -6, 9, -1, 1, 5, 1, 2, 7, -4, 5, -8) \in \Abb_{\kk}^N.
\end{equation*}
The point $\ab$ corresponds to an ideal
\begin{align*}
    I = (&8x_1^2 - 6x_1x_2 + 9x_1x_3 - x_2^2 + x_2x_3 + 5x_3^2, \\
    &x_1^2 + 2x_1x_2 + 7x_1x_3 - 4x_2^2 + 5x_2x_3 - 8x_3^2).
\end{align*}
Computing the reduced \groebner basis of $I$ with respect to the lexicographic
order, the initial ideal of $I$ is
\begin{equation*}
    \ini_{\preceq}(I) = (x_1^2, x_1x_2, x_1x_3^2, x_2^4).
\end{equation*}
It is the lexsegment ideal of the Hilbert series $\frac{t^2 + 2t + 1}{1 - t}$.
Furthermore, since the Hilbert series of $S/I$ is $\frac{t^2 + 2t + 1}{1 - t}$,
$I$ is a $U$-generic ideal.
Therefore, if $\kk = \QQ, n = 3, s = 2$ and $d_1, d_2 = 2$,
the initial ideal of $V$-generic ideals is the lexsegment ideal.
By performing similar computations in other cases, we obtain the following corollary.

\begin{corollary}\label{corollary:main corollary2}
    Assume the characteristic of $\kk$ is $0$.
    If $n,s$ and $d_i$'s satisfy one of the following conditions,
    then initial ideals of $V$-generic ideals with respect to the lexicographic order are lexsegment ideals.
    \begin{itemize}
        \item $n=3, s=2$, and $2 \leq d_1, d_2 \leq 3$.
        \item $n=3, s=3$, and $2 \leq d_1, d_2, d_3 \leq 3$.
        \item $n=4, s=3$, and $2 \leq d_1, d_2, d_3 \leq 3$.
    \end{itemize}
\end{corollary}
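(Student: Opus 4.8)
The plan is to reduce the statement to a finite verification via Theorem~\ref{theorem:main theorem2}. That theorem says that, for the lexicographic order, as soon as some $\ab \in U$ produces an initial ideal equal to the lexsegment ideal of $\HS(S/I;t)$, the initial ideal of \emph{every} $V$-generic ideal equals that same lexsegment ideal. The three families of parameters listed in the statement comprise only finitely many triples $\bigl(n,s,(d_1,\ldots,d_s)\bigr)$ (one has $s\le 3$, $n\le 4$, and $2\le d_i\le 3$), so it suffices to treat them one at a time: for each such triple, exhibit a single point $\ab\in U$ whose associated ideal $I$ satisfies $\ini_\preceq(I)=$ the lexsegment ideal of $\HS(S/I;t)$.

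For a fixed case I would first pin down two explicit objects. The generic Hilbert series: in every listed case either $s\le n$, so $U$-generic ideals are complete intersections by~\cite{FroebergLoefwall}, or $n=3$, where \froeberg conjecture is known (see the references in the introduction); hence for $U$-generic $I$ one has $\HS(S/I;t)=\bigl|\, \prod_{i=1}^{s}(1-t^{d_i})/(1-t)^n \,\bigr|$, which is an explicit polynomial or rational function. From this Hilbert series I would write down the lexsegment ideal of $\HS(S/I;t)$ by the greedy lexsegment construction degree by degree, obtaining an explicit list of monomial generators. Then I would choose a point $\ab$ with small integer entries essentially at random, exactly as in the worked example for $n=3$, $s=2$, $d_1=d_2=2$, form $I=\sigma_{\ab}(F_1,\ldots,F_s)$, and compute the reduced \groebner basis of $I$ with respect to the lexicographic order by computer algebra.

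Two checks then close that case. First, one verifies that $\HF(S/I,d)$ agrees with the generic Hilbert function for all $d$ (equivalently, that the computed $\ini_\preceq(I)$ has the generic Hilbert series); by Theorem~\ref{theorem:Froeberg generic hilbert series}, the generic Hilbert function is pointwise minimal over all ideals corresponding to points of $\Abb_\kk^N$, so this agreement certifies $\ab\in U$ — this step, rather than the \groebner computation alone, is what actually places the witness in $U$. Second, one checks that the computed $\ini_\preceq(I)$ coincides with the lexsegment ideal identified above. Having verified both points in each of the finitely many cases, Theorem~\ref{theorem:main theorem2} yields the claimed conclusion for $V$-generic ideals; note that we only ever test $U$-generic ideals, and the passage to $V\subset U$ is precisely what Theorem~\ref{theorem:main theorem2} provides.

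The content here is not a theoretical obstacle but a bookkeeping one, so the ``hard part'' is organizational: confirming that the parameter list is genuinely finite, making sure each exhibited $\ab$ truly lies in $U$ (hence the necessity of the Hilbert-function check), and using the characteristic-$0$ hypothesis both to invoke the closed form of the generic Hilbert series in these cases and to justify that a generic rational point witnesses membership in the Zariski-open dense set $U$. In practice the only real risk is a transcription or computation error in one of the cases, which is why I would record, for each triple, both the chosen $\ab$ and the resulting $\ini_\preceq(I)$ explicitly.
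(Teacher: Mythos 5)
Your proposal matches the paper's own argument: the paper proves this corollary exactly by exhibiting, for each of the finitely many parameter triples, a random integer point $\ab$, computing the reduced \groebner basis with respect to the lexicographic order, checking that the resulting Hilbert series is the generic one (so that $\ab \in U$) and that the initial ideal is the corresponding lexsegment ideal, and then invoking Theorem~\ref{theorem:main theorem2}. Your additional remarks on why the generic Hilbert series is known in closed form in these cases and on recording the witnesses explicitly are sensible but do not change the route.
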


Now, we propose a new method of a computation of the initial ideals of $V$-generic ideals.
We use a stability condition of a \groebner basis.
This notion is used for computations of comprehensive \groebner systems.

Let $\tbar = \{a_1, \ldots, a_m\}$ be a set of parameter variables and
$\xbar = \{x_1, \ldots, x_n\}$ a set of main variables.
Let $\calG = \{G_1(\tbar, \xbar), \ldots, G_s(\tbar, \xbar)\} \subset \kk[\tbar][\xbar]$
be a set of polynomials and $I'$ an ideal in $\kk[\tbar][\xbar]$
generated by $\calG$. We fix a monomial order $\preceq$ on $\calM(\xbar)$.
We call $\calG$ a \textit{comprehensive \groebner basis} of $I'$ if,
for all $\ab \in \Abb_{\kk}^m$,
\begin{equation*}
    \sigma_{\ab}(\calG) = \{\sigma_{\ab}(G_i) = G_i(\ab, \xbar) \mid G_i \in \calG\}
\end{equation*}
is a \groebner basis of an ideal in $\kk[\xbar]$
\begin{equation*}
    \sigma_{\ab}(I') = \{\sigma_{\ab}(F) = F(\ab, \xbar) \mid F \in I'\}
\end{equation*}
with respect to $\preceq$. Also, we call $\{(A_1, \calG_1), \ldots, (A_l, \calG_l)\}$
a \textit{comprehensive \groebner system} of $I'$ if
$\Abb_{\kk}^m = A_1 \sqcup \ldots \sqcup A_l$ is an appropriate partition, and
for each $i = 1, \ldots, l$, the set of finite polynomials $\sigma_{\ab}(\calG_i)$
is a \groebner basis of
$\sigma_{\ab}(I')$ for all $\ab \in A_i$. Comprehensive \groebner basis and
comprehensive \groebner system were introduced by Weispfenning~\cite{WeispfenningCGB}.

If we apply an algorithm for computing comprehensive \groebner systems to
\begin{eqnarray*}
    F_1 &=& a_{1,1}x_1^{d_1} + a_{1,2}x_1^{d_1-1}x_2 + \cdots + a_{1,r_1}x_n^{d_1} \\
    &\vdots& \\
    F_s &=& a_{s,1}x_1^{d_s} + a_{s,2}x_1^{d_s-1}x_2 + \cdots + a_{s,r_s}x_n^{d_s}
\end{eqnarray*}
and compute a comprehensive \groebner system
$\{(A_1, \calG_1), \ldots, (A_l, \calG_l)\}$ of $I' = (F_1, \ldots, F_s)$,
we can determine whether there exists $\ab \in U$ such that $\ini_{\preceq}(I)$
is the lexsegment ideal. However, the computation of comprehensive \groebner system
of $I'$ took tremendous time so it is very difficult to compute it within
a realistic timeframe in general.
It is possible to determine the existence of such ideals without computing
a comprehensive \groebner basis.

There are several effective algorithms for computations of comprehensive \groebner
systems; for example, see~\cite{KapurSunWangCGS},~\cite{ManubensMontesCGS},
~\cite{NabeshimaCGS}, and~\cite{SuzukiSatoCGS}.
Some algorithms compute a \groebner basis of an ideal in $\kk[\tbar, \xbar]$
with respect to an inverse block monomial order $\preceq'$ on the first step
of algorithm.
The definition of an inverse block monomial order is the following.

\begin{definition}
    Let $\preceq_{\tbar}$ and $\preceq$ be monomial orders on $\calM(\tbar)$ and $\calM(\xbar)$,
    respectively.
    Let $u_1, u_2$ be monomials in $\calM(\tbar)$, and $v_1, v_2$ monomials
    in $\calM(\xbar)$. A monomial order $\preceq'$ on $\calM(\tbar \cup \xbar)$
    is defined as follows:
    \begin{equation*}
        u_1v_1 \preceq' u_2v_2 \overset{\mathrm{def}}{\iff}
        v_1 \preceq v_2 \text{ or } (v_1 = v_2, \text{ and } u_1 \preceq_{\tbar} u_2).
    \end{equation*}
    The monomial order $\preceq'$ is called an inverse block monomial order
    on $\calM(\tbar \cup \xbar)$.
\end{definition}

In what follows, we assume that a monomial order $\preceq'$ on $\calM(\tbar \cup \xbar)$
is an inverse block monomial order, and when we restrict $\preceq'$ to a
monomial order on $\calM(\xbar)$, it becomes the monomial order $\preceq$ we fixed,
which, in this section, is specifically the lexicographic order.
For a polynomial $F \in \kk[\tbar, \xbar]$, we sometimes consider $F$ as an element of
$\kk[\tbar][\xbar]$ and we define $\LC_{\preceq}(F) \in \kk[\tbar]$ as the leading
coefficient of $F$ with respect to restricted monomial order
$\preceq$ on $\calM(\xbar)$. Similarly, we define $\LM_{\preceq}(f) \in \kk[\xbar]$
as the leading monomial of $F$ with respect to $\preceq$.

The stability condition of Lemma~\ref{lemma:stability condition} is used
in~\cite{SuzukiSatoCGS}.

\begin{lemma}\label{lemma:stability condition}
    Let $I'$ be an ideal in $\kk[\tbar, \xbar]$ and
    $\calG = \{G_1, \ldots, G_r, G_{r+1}, \ldots, G_t\}$ a \groebner
    basis of $I'$ with respect to an inverse block monomial order $\preceq'$.
    For a point $\ab$, we assume $\sigma_{\ab}(\LC_{\preceq}(G_i)) \neq 0(1 \leq i \leq r)$
    and $\sigma_{\ab}(\LC_{\preceq}(G_i)) = 0(r+1 \leq i \leq s)$. Then,
    the following conditions are equivalent:
    \begin{enumerate}
        \item $\{\sigma_{\ab}(G_1), \ldots, \sigma_{\ab}(G_r)\}$ is a \groebner basis
        of $\sigma_{\ab}(I')$ with respect to the restricted monomial order $\preceq$;
        \item For all $i = r+1, \ldots, t$, $\sigma_{\ab}(G_i)$ is reducible to $0$
        modulo $\{\sigma_{\ab}(G_1), \ldots, \sigma_{\ab}(G_r)\}$.
    \end{enumerate}
\end{lemma}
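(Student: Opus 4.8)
The plan is to prove the equivalence (1) $\Leftrightarrow$ (2) by the standard Buchberger-criterion-style argument, using the inverse block order $\preceq'$ to transfer the reduction-to-zero property of S-polynomials from $\kk[\tbar,\xbar]$ down to the specialized ring $\kk[\xbar]$. The key structural fact I would exploit first is that, because $\preceq'$ is an inverse block order refining $\preceq$ on $\calM(\xbar)$, we have $\ini_{\preceq'}(G_i) = \LC_{\preceq}(G_i)\cdot \ini_{\preceq}(G_i)$ up to a $\tbar$-monomial factor; more precisely the $\xbar$-part of $\LM_{\preceq'}(G_i)$ is exactly $\LM_{\preceq}(G_i)$. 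Consequently, whenever $\sigma_{\ab}(\LC_{\preceq}(G_i)) \neq 0$, the specialization does not collapse the leading term: $\LM_{\preceq}(\sigma_{\ab}(G_i)) = \LM_{\preceq}(G_i)$. This is the hypothesis for $1 \le i \le r$ and fails (the leading term drops) for $r+1 \le i \le t$; that is the whole reason those $G_i$ must be handled separately.

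\medskip

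Next I would argue the direction (2) $\Rightarrow$ (1). Assume each $\sigma_{\ab}(G_i)$ with $r+1 \le i \le t$ reduces to $0$ modulo $H := \{\sigma_{\ab}(G_1), \ldots, \sigma_{\ab}(G_r)\}$. First note $\sigma_{\ab}(I')$ is generated by $\sigma_{\ab}(G_1), \ldots, \sigma_{\ab}(G_t)$, and by the reduction hypothesis the extra elements lie in $(H)$, so $\sigma_{\ab}(I') = (H)$ as ideals in $\kk[\xbar]$. To show $H$ is a \groebner basis of $(H)$ I would verify Buchberger's criterion: for each pair $1 \le i < j \le r$, the S-polynomial $S(\sigma_{\ab}(G_i), \sigma_{\ab}(G_j))$ reduces to $0$ modulo $H$. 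Here is where the global \groebner basis property in $\kk[\tbar,\xbar]$ enters: $S_{\preceq'}(G_i, G_j)$ reduces to $0$ modulo $\calG = \{G_1,\ldots,G_t\}$, giving a standard representation $S_{\preceq'}(G_i,G_j) = \sum_k q_k G_k$ with $\LM_{\preceq'}(q_k G_k) \preceq' \LM_{\preceq'}(S_{\preceq'}(G_i,G_j))$. Applying $\sigma_{\ab}$ and using that (for $i,j \le r$) the specialized S-polynomial has a controlled leading term — one must check $\sigma_{\ab}$ commutes suitably with forming $S$-polynomials up to a unit, using $\sigma_{\ab}(\LC_{\preceq}(G_i)) \neq 0$ — the $k \le r$ terms form a standard representation of $S_{\preceq}(\sigma_{\ab}(G_i),\sigma_{\ab}(G_j))$ modulo $H$, while the $k > r$ terms $\sigma_{\ab}(q_k)\sigma_{\ab}(G_k)$ are each in $(H)$ with the leading-monomial bound preserved (possibly after one more reduction step, since $\sigma_{\ab}(G_k)$ itself reduces to $0$ mod $H$, and any such reduction only decreases leading $\xbar$-monomials). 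Collecting, $S_{\preceq}(\sigma_{\ab}(G_i),\sigma_{\ab}(G_j))$ has a representation over $H$ with leading terms bounded by its own, hence reduces to $0$; Buchberger's criterion then yields (1).

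\medskip

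For (1) $\Rightarrow$ (2): assume $H$ is a \groebner basis of $\sigma_{\ab}(I')$. Each $\sigma_{\ab}(G_i)$ for $r+1 \le i \le t$ lies in $\sigma_{\ab}(I') = (H)$ (the ideal equality holds because $G_i \in I'$ so $\sigma_{\ab}(G_i) \in \sigma_{\ab}(I')$, and $\sigma_{\ab}(I')$ is generated by the $\sigma_{\ab}(G_k)$, $1 \le k \le t$; but to even state $(H) = \sigma_{\ab}(I')$ one needs the $k > r$ generators to be redundant — which is exactly what we are proving, so instead I argue directly). Since $H$ is a \groebner basis of $\sigma_{\ab}(I')$ and $\sigma_{\ab}(G_i) \in \sigma_{\ab}(I')$, the element $\sigma_{\ab}(G_i)$ reduces to $0$ modulo $H$ by the defining property of \groebner bases (remainder of division by a \groebner basis of the ideal is $0$). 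That is precisely (2).

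\medskip

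\textbf{The main obstacle} I anticipate is the bookkeeping in the (2) $\Rightarrow$ (1) direction: making precise how $\sigma_{\ab}$ interacts with the S-polynomial construction and the standard representation, since specialization can lower the $\preceq'$-leading term of a $G_k$ with $k > r$ and can (a priori) lower leading terms of the quotient polynomials $q_k$ or cause cancellation among the $\sigma_{\ab}(q_k G_k)$. The resolution is that the inverse block order makes the $\xbar$-degree the primary comparison, so the only leading-term drops happen when $\sigma_{\ab}(\LC_{\preceq})$ vanishes — controlled exactly by the index set $\{r+1,\ldots,t\}$ — and every such dropped term can be absorbed by one further reduction using hypothesis (2), without increasing any $\xbar$-leading monomial. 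I would present this as a lemma-internal computation, citing the treatment in~\cite{SuzukiSatoCGS} for the analogous stability argument used there, and citing standard \groebner basis facts (the division-remainder characterization and Buchberger's criterion) from~\cite{BeckerWeispfenning} or~\cite{CoxLittleOSheaIVA}.
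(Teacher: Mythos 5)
Your proposal is correct in outline, but it is doing something quite different from the paper: the paper's entire proof of this lemma is the single sentence ``It follows from \cite[Theorem 3.1]{KalkbrenerStability}.'' So you are not paralleling the paper's argument --- you are reconstructing the cited result of Kalkbrener from scratch. That is a legitimate (and arguably more informative) route; what the paper's approach buys is brevity and a precise pointer to where the nontrivial specialization argument lives, while yours buys self-containedness at the cost of the bookkeeping you correctly identify as the main obstacle. Your direction $(1)\Rightarrow(2)$ is clean and complete. Your structural observation that the inverse block order forces the $\xbar$-part of $\LM_{\preceq'}(G_i)$ to equal $\LM_{\preceq}(G_i)$, so that specialization preserves the $\preceq$-leading monomial exactly when $\sigma_{\ab}(\LC_{\preceq}(G_i))\neq 0$, is the right engine for $(2)\Rightarrow(1)$.

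One point in $(2)\Rightarrow(1)$ needs to be repaired if you write it out in full: $\sigma_{\ab}\bigl(S_{\preceq'}(G_i,G_j)\bigr)$ is \emph{not} a scalar multiple of $S_{\preceq}\bigl(\sigma_{\ab}(G_i),\sigma_{\ab}(G_j)\bigr)$, because the $\preceq'$-S-polynomial only cancels the leading $\tbar$-monomial of $\LC_{\preceq}(G_i)$ times $L=\operatorname{lcm}\bigl(\LM_{\preceq}(G_i),\LM_{\preceq}(G_j)\bigr)$, so terms in the $\xbar$-monomial $L$ survive in $S_{\preceq'}(G_i,G_j)$. The correct object to reduce modulo $\calG$ is the pseudo-S-polynomial $P_{ij}=\LC_{\preceq}(G_j)\frac{L}{\LM_{\preceq}(G_i)}G_i-\LC_{\preceq}(G_i)\frac{L}{\LM_{\preceq}(G_j)}G_j\in I'$, whose coefficient of $L$ vanishes identically and which satisfies $\sigma_{\ab}(P_{ij})=\sigma_{\ab}\bigl(\LC_{\preceq}(G_i)\LC_{\preceq}(G_j)\bigr)\cdot S_{\preceq}\bigl(\sigma_{\ab}(G_i),\sigma_{\ab}(G_j)\bigr)$ with nonzero scalar for $i,j\le r$. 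Since $P_{ij}\in I'$, it has a standard representation $\sum_k q_kG_k$ with respect to $\preceq'$, all of whose $\xbar$-leading monomials are $\prec L$; specializing and absorbing the $k>r$ terms via hypothesis $(2)$ exactly as you describe then gives the refined Buchberger criterion for $H$. You hedge with ``up to a unit'' and ``one must check,'' so this is an imprecision rather than a fatal gap, but the literal claim that the $k\le r$ terms of $\sigma_{\ab}$ applied to the standard representation of $S_{\preceq'}(G_i,G_j)$ form a standard representation of $S_{\preceq}\bigl(\sigma_{\ab}(G_i),\sigma_{\ab}(G_j)\bigr)$ is false as stated and must be replaced by the $P_{ij}$ argument.
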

\begin{proof}
    It follows from~\cite[Theorem 3.1]{KalkbrenerStability}.
\end{proof}

By applying Lemma~\ref{lemma:stability condition} to the polynomials,
that are the main focus of our paper, we obtain the following theorem.

\begin{theorem}\label{theorem:main theorem3}
    Let $I'$ be an ideal of $\kk[\tbar, \xbar]$ generated by
\begin{eqnarray*}
    F_1 &=& t_{1,1}x_1^{d_1} + t_{1,2}x_1^{d_1-1}x_2 + \cdots + t_{1,r_1}x_n^{d_1} \\
    &\vdots& \\
    F_s &=& t_{s,1}x_1^{d_s} + t_{s,2}x_1^{d_s-1}x_2 + \cdots + t_{s,r_s}x_n^{d_s}.
\end{eqnarray*}
Let $\calG$ be a \groebner basis with respect to an inverse block monomial order
$\preceq'$. Then, the monomial ideal in $\kk[\xbar]$ generated by
$\{\LM_{\preceq}(g) \mid g \in \calG\}$ coincides
with the initial ideal of $V$-generic ideals with respect to the restricted monomial
order $\preceq$.
\end{theorem}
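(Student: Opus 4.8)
The plan is to combine the stability condition of Lemma~\ref{lemma:stability condition} with the existence of the $V$-generic initial ideal (Theorem~\ref{theorem:existence of initial ideal of generic case}). The inverse block order $\preceq'$ is designed precisely so that a \groebner basis of $I'$ over the parameter ring specializes, on a dense open set of parameter values, to a \groebner basis of the specialized ideal with respect to the restricted order $\preceq$; on that same open set the leading monomials are unchanged under specialization, so the monomial ideal $(\LM_{\preceq}(g) \mid g \in \calG)$ is actually realized as $\ini_{\preceq}(I)$ for a concrete ideal $I$. Intersecting with the set $V$ of Theorem~\ref{theorem:existence of initial ideal of generic case} then forces this monomial ideal to be the initial ideal of $V$-generic ideals.

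Concretely, I first observe that $\sigma_{\ab} \colon \kk[\tbar][\xbar] \to \kk[\xbar]$ is surjective, so for every $\ab \in \Abb_{\kk}^N$ the image $\sigma_{\ab}(I')$ is exactly the ideal $(f_1, \ldots, f_s) = I$ corresponding to $\ab$. Write $\calG = \{G_1, \ldots, G_t\}$. Each $\LC_{\preceq}(G_i) \in \kk[\tbar]$ is a nonzero polynomial, so
\[
    W = \{\ab \in \Abb_{\kk}^N \mid \sigma_{\ab}(\LC_{\preceq}(G_i)) \neq 0 \text{ for all } 1 \leq i \leq t\}
\]
is a Zariski open dense set. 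For $\ab \in W$ we are in the situation of Lemma~\ref{lemma:stability condition} with $r = t$, so condition~(2) there is vacuously satisfied and hence $\{\sigma_{\ab}(G_1), \ldots, \sigma_{\ab}(G_t)\}$ is a \groebner basis of $\sigma_{\ab}(I') = I$ with respect to $\preceq$. Moreover, writing $G_i = \LC_{\preceq}(G_i)\LM_{\preceq}(G_i) + (\text{terms whose } \xbar\text{-monomial is } \prec \LM_{\preceq}(G_i))$, we see that in $\sigma_{\ab}(G_i)$ the coefficient of $\LM_{\preceq}(G_i)$ is nonzero and all other $\xbar$-monomials appearing are still $\prec \LM_{\preceq}(G_i)$; thus $\LM_{\preceq}(\sigma_{\ab}(G_i)) = \LM_{\preceq}(G_i)$ for each $i$. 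Therefore $\ini_{\preceq}(I) = (\LM_{\preceq}(G_1), \ldots, \LM_{\preceq}(G_t))$ for every $\ab \in W$.

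Finally, let $V$ be the Zariski open dense set of Theorem~\ref{theorem:existence of initial ideal of generic case}, on which $\ini_{\preceq}(I)$ equals the initial ideal of $V$-generic ideals. Since $V$ and $W$ are both dense and open, $V \cap W \neq \varnothing$; choosing $\ab \in V \cap W$, the corresponding ideal $I$ is $V$-generic and satisfies $\ini_{\preceq}(I) = (\LM_{\preceq}(g) \mid g \in \calG)$ by the previous paragraph. This is exactly the claimed equality.

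The argument is short; the only points requiring care are bookkeeping ones: that $\sigma_{\ab}(I')$ coincides with the ideal attached to $\ab$, that a surviving leading coefficient preserves the leading monomial under specialization, and that the two dense open sets meet. The genuine input --- that an inverse block order makes the \groebner basis stable under generic specialization --- is supplied by Lemma~\ref{lemma:stability condition}, so I expect no real obstacle beyond invoking it correctly with $r = t$.
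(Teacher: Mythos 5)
Your proposal is correct and follows essentially the same route as the paper: define the dense open set $W$ where all $\LC_{\preceq}(G_i)$ survive specialization, invoke the stability condition of Lemma~\ref{lemma:stability condition} (with the second group of basis elements empty) to get that $\sigma_{\ab}(\calG)$ is a \groebner basis of $I$ with unchanged leading monomials, and then use density to relate $W$ to $V$. Your final step of simply choosing a point in $V \cap W$ is a slightly cleaner way to finish than the paper's argument that $W \subset V$, but the substance is identical.
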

\begin{proof}
    Let $W$ be the set of points in $\Abb_{\kk}^N$ that do not vanish on
    all of polynomials in $\{\LC_{\preceq}(g) \mid g \in \calG\}$.
    Since $\LC_{\preceq}(g)(\ab) \neq 0$ is equivalent to $\sigma_{\ab}(g) \neq 0$,
    the set of polynomials $\sigma_{\ab}(\calG)$ is a \groebner basis of $\sigma_{\ab}(I')$
    for all $\ab \in W$ by Lemma~\ref{lemma:stability condition}.
    Therefore, $\ini_{\preceq}(I)$ is the same for all $\ab \in W$.
    It is a monomial ideal generated by $\{\LM_{\preceq}(g) \mid g \in \calG\}$
    and it is equal to a monomial ideal generated by
    $\{\ini_{\preceq}(\sigma_{\ab}(g)) \mid g \in \calG\}$.
    Since we defined $V$ as the set of points that $\ini_{\preceq}(I)$ is the initial ideal of
    $V$-generic ideals as large as possible, $W \subset V$ or $W \cap V = \emptyset$.
    We assumed $\kk$ is infinite, so $W$ and $V$ are Zariski open dense sets in
    $\Abb_{\kk}^N$, we have $W \subset V$.
\end{proof}

By Theorem~\ref{theorem:main theorem2} and Theorem~\ref{theorem:main theorem3},
we can prove the non-existence of $\ab \in U$ such that $\ini_{\preceq}(I)$
is the lexsegment ideal.

\begin{corollary}\label{corollary:main corollary3}
    Assume the characteristic of $\kk$ is $0$.
    If $n = 4, s = 2, d_1=d_2=2$, there is no point $\ab \in U$ whose initial ideal
    with respect to the lexicographic order is the lexsegment ideal.
\end{corollary}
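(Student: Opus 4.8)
The plan is to argue by contraposition through Theorem~\ref{theorem:main theorem2}: it says that if some $\ab \in U$ has lexsegment initial ideal then so does the initial ideal of $V$-generic ideals, so it suffices to prove that for $n = 4$, $s = 2$, $d_1 = d_2 = 2$ the initial ideal $J$ of $V$-generic ideals with respect to the lexicographic order is \emph{not} a lexsegment ideal.

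Since $s = 2 \le n = 4$, a $U$-generic ideal is a complete intersection of two quadrics, so $\HS(S/I;t) = (1-t^2)^2/(1-t)^4 = (1+t)^2/(1-t)^2$ and $\HF(S/I,d) = 4d$ for $d \ge 1$. Feeding this Hilbert function into Macaulay's construction (Theorem~\ref{theorem:Macaulay's Theorem}) yields the lexsegment ideal $L$ with $\HS(S/L;t) = \HS(S/I;t)$ against which $J$ must be compared; for instance $L_4$ is the set of all degree-$4$ monomials divisible by $x_1$ except $x_1x_4^3$. To produce $J$ itself I would apply Theorem~\ref{theorem:main theorem3}. Working over $\QQ$ (which suffices, as $\kk$ has characteristic $0$), form the parametric quadrics $F_1 = \sum_{k=1}^{10} t_{1,k}\tau_k$ and $F_2 = \sum_{k=1}^{10} t_{2,k}\tau_k$, where $\tau_1, \dots, \tau_{10}$ are the degree-$2$ monomials in $x_1, \dots, x_4$; fix an inverse block monomial order $\preceq'$ on $\calM(\tbar\cup\xbar)$ whose restriction to $\calM(\xbar)$ is the lexicographic order; and compute a \groebner basis $\calG$ of $I' = (F_1, F_2) \subset \QQ[\tbar,\xbar]$ with respect to $\preceq'$. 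Then, by Theorem~\ref{theorem:main theorem3} (which rests on Lemma~\ref{lemma:stability condition}), $J$ is the monomial ideal generated by $\{\LM_{\preceq}(g) \mid g \in \calG\}$. Reading $J$ off from $\calG$ and comparing with $L$, one exhibits monomials $m \in J$ and $m' \notin J$ with $\deg m = \deg m'$ and $m \preceq m'$, so $J$ violates Definition~\ref{definition:lexsegment ideal} and $J \ne L$; combined with the reduction above, this proves the corollary. It is worth noting in advance that $J$ and $L$ must agree in degree $2$ (both degree-$2$ parts equal $\{x_1^2, x_1x_2\}$), and one checks that they also agree in degree $3$, so the discrepancy occurs in degree $4$ or higher.

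The step I expect to be the main obstacle is the \groebner basis computation in Theorem~\ref{theorem:main theorem3}. It is a single \groebner basis and not a full comprehensive \groebner system, but $I'$ still lives in a polynomial ring with $20$ parameter variables and $4$ main variables, and the coefficients in $\QQ[\tbar]$ can swell during the computation; so it must be set up carefully — a sensible choice of the parameter order $\preceq_{\tbar}$, and use of the fact that $I'$ is generated by only two quadrics — to finish within a realistic timeframe on a computer algebra system. A routine secondary point is extracting $L$ correctly from Macaulay's Theorem for the comparison, though in practice it is enough to pinpoint a single explicit failure of the lexsegment property in the computed $J$.
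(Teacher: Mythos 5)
Your proposal is correct and follows essentially the same route as the paper: reduce by contraposition of Theorem~\ref{theorem:main theorem2}, compute the initial ideal of $V$-generic ideals via Theorem~\ref{theorem:main theorem3} (a single \groebner basis of $(F_1,F_2)$ with respect to an inverse block order), and exhibit a failure of the lexsegment property. The paper's computation yields $(x_1^2, x_1x_2, x_1x_3^2, x_2^4)$ with witness $x_1x_3x_4^2 \succ x_2^4$ in degree $4$, exactly where you predicted the discrepancy would appear.
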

\begin{proof}
    We compute \groebner basis $\calG$ of two polynomials in $\QQ[\tbar, \xbar]$
    \begin{equation*}
        F_1 = a_{1,1}x_1^{2} + \cdots + a_{1,10}x_4^{2}, \quad F_2 = a_{2,1}x_1^{2} + \cdots + a_{2,10}x_4^{2}
    \end{equation*}
    with respect to the lexicographic order with
    \begin{equation*}
        x_1 \succeq \ldots \succeq x_4 \succeq t_{1,1} \succeq \ldots \succeq t_{1,10} \succeq t_{2,1} \succeq \ldots \succeq t_{2,10}.
    \end{equation*}
    Then, the monomial ideal generated by $\{\LM_{\preceq}(g) \mid g \in \calG\}$ is
    \begin{equation*}
        (x_1^2, x_1x_2, x_1x_3^2, x_2^4).
    \end{equation*}
    By Theorem~\ref{theorem:main theorem3}, this monomial ideal is the initial ideal of
    $V$-generic ideals with respect to the lexicographic order of $\xbar$.
    Furthermore, it is not the lexsegment ideal because $x_1x_3x_4^2 \succeq x_2^4$ but
    $x_1x_3x_4^2$ is not in the ideal.
    Thus, by the contraposition of Theorem~\ref{theorem:main theorem2},
    there is no point $\ab \in U$ whose initial ideal
    with respect to the lexicographic order is the lexsegment ideal.
\end{proof}

By Corollary~\ref{corollary:main corollary2} and Corollary~\ref{corollary:main corollary3},
we can propose these questions and expect that these statements are true.

\begin{question}
    If $n-s \geq 2$, is there no point $\ab \in U$ such that $\ini_{\preceq}(I)$
    is the lexsegment ideal for any $d_i$'s?
\end{question}

\begin{question}\label{question:lexicographic Moreno-Socias conjecture}
    If $n-s \leq 1$, is the initial ideal of $V$-generic ideals the lexsegment ideal
    for any $d_i$'s?
\end{question}

In particular, Question~\ref{question:lexicographic Moreno-Socias conjecture}
can be considered as lexicographic analogue of \moreno conjecture.

\bibliography{references}
\bibliographystyle{amsplain}
\end{document}